\newtheorem{theorem}{Theorem}[section]
\newtheorem{lemma}[theorem]{Lemma}
\newtheorem{definition}[theorem]{Definition}
\newtheorem{corollary}[theorem]{Corollary}
\theoremstyle{remark}
\newtheorem{remark}{Remark}
\newtheorem{example}{Example}
\numberwithin{theorem}{section}
\numberwithin{equation}{section}
\newcommand{\cv}{\mathbb{C}}
\newcommand{\dv}{\mathbb{D}}
\newcommand{\aut}{\textup{Aut}}
\newcommand{\ds}{\displaystyle}
\def\Re{{\sf Re}}
\begin{document}

\title[Generalized squeezing functions and Fridman invariants]{On the generalized squeezing functions and Fridman invariants of special domains}

\author[F. Rong, S. Yang]{Feng Rong, Shichao Yang}

\address{School of Mathematical Sciences, Shanghai Jiao Tong University, 800 Dong Chuan Road, Shanghai, 200240, P.R. China}
\email{frong@sjtu.edu.cn}

\address{School of Mathematics and Statistics, Huizhou University, 46 Yan Da Boulevard, Huizhou, Guangdong, 516007, P.R. China}
\email{yangshichao68@163.com}

\subjclass[2010]{32H02, 32F45}

\keywords{Fridman invariant, generalized squeezing function}

\thanks{The authors are partially supported by the National Natural Science Foundation of China (grant no. 11871333).}

\begin{abstract}
The main purpose of this paper is to study the generalized squeezing functions and Fridman invariants of some special domains. As applications, we give the precise form of generalized squeezing functions and Fridman invariants of various domains such as $n$-dimensional annuli. Furthermore, we provide domains with non-plurisubharmonic generalized squeezing function or Fridman invariant.
\end{abstract}

\maketitle

\section{Introduction}

To study complex and geometric structures of a domain, one may consider holomorphic maps from some standard domains such as balls to this domain and vice visa.

Let $D$ be a bounded domain and $\Omega$ a bounded homogeneous domain in $\mathbb {C}^n$ and $z\in D$. Denote by $O(\Omega,D)$ the set of holomorphic maps from $\Omega$ into $D$. Denote by $d$ either the Carath\'{e}odory pseudosdistance $c$ or the Kobayashi pseudodistance $k$ on $D$. Fridman \cite{Fridman1979} introduced a holomorphic invariant, now called the \textit{Fridman invariant}, as follows:
$$e^{\Omega^{d}}_{D}(z)= \sup\left\{\tanh(r):\ B^{d}_{D}(z,r) \subset f(\Omega),\ f\in O(\Omega, D),\ f\:\textup{is  injetive}\right\},$$
where $B^{d}_{D}(0, r)$ is the $d$-ball centered at $z$ with radius $r$ (in \cite{Fridman1979, Fridman1983}, $\inf \frac{1}{r}$ was used instead of $\sup \tanh(r)$). We denote $e^{\Omega^c}_{D}(z)$ by $\tilde{e}^{\Omega}_{D}(z)$ and $e^{\Omega^k}_{D}(z)$ by $e^{\Omega}_{D}(z)$ in this paper. When $\Omega$ is the unit ball $B^n$, $\tilde{e}^{B^n}_{D}(z)$ (resp. $e^{B^n}_{D}(z)$) is denoted by $\tilde{e}_{D}(z)$ (resp. $e_{D}(z)$).

Deng-Guan-Zhang \cite{Deng2012} introduced another invariant, called the \textit{squeezing function}, as follows:
$$s_{D}(z)=\sup\left\{r:\ rB^{n} \subset f(D),\ f\in O(D,B^{n}),\ f(z)=0,\ f\:\text{is injective}\right\}.$$

Let $\Omega$ be a bounded, balanced and convex domain in $\mathbb {C}^{n}$. Then $\Omega=\{z\in \mathbb {C}^{n}:\ \rho_{\Omega}(z)<1\}$, where $\rho_{\Omega}(z)$ is the Minkowski function of $\Omega$. It is known that $\rho_{\Omega}(z)$ is a $\mathbb{C}$-norm (see e.g. \cite[Lemma 3.3]{RY}). In \cite{RY}, we defined the \textit{generalized squeezing function} as follows:
$$s_D^\Omega(z)=\sup\{r:\ B^{\rho_{\Omega}}_{n}(0,r) \subset f(D),\ f\in O(D,\Omega),\ f(z)=0,\ f\: \text{is injective}\},$$
where $B^{\rho_{\Omega}}_{n}(0,r)=\{z\in \mathbb{C}^{n}:\ \rho_{\Omega}(z)<r\}$. It is easy to see that $s^{\mathbb{B}^n}_{D}(z)=s_{D}(z)$.

From the definitions, it is clear that $e^{\Omega^{d}}_{D}$ and $s_D^\Omega$ are invariant under biholomorphisms. Many properties and applications of $s_{D}$ have been recently explored by various authors (see e.g \cite{Deng2012, FR, JK, KZ2016, Nikolov2020, Zimmer2018, Zimmer2019} and the references therein). There have also been some studies on $\tilde{e}_{D}$ and $e_{D}$ (see e.g. \cite{Deng-Zhang2019, Fridman1983, MV2012, Nikolov-Verma2018}). For more details on various recent results, we refer the readers to the survey papers \cite{Dengsurvey2019, Zhang2017}.

It is difficult to compute the squeezing function, and there are very few domains on which the precise form of the squeezing function is known. Deng-Guan-Zhang \cite{Deng2012} showed that the squeezing functions of classical bounded symmetric domains are certain constants, and that $s_{B^{n}\backslash \{0\}}(z)=\|z\|$. Ng-Tang-Tsai \cite{NTT} have shown that $s_{A_{r}}(z)=\max \{|z|, \frac{r}{|z|}\}$, where $A_{r}=\{z \in \mathbb{C}:\ 0<r<|z|<1\}$. The original motivation of this paper is to give the precise form of the squeezing functions for the $n$-dimensional annuli $A^{n}_{r}=\{z \in \mathbb{C}^{n}:\ 0<r<\|z\|<1\}$, $n\ge 2$.

On the other hand, Forn\ae ss-Shcherbina \cite{FS} have proved that there exist strictly pseudoconvex domains in $\mathbb{C}^{2}$ whose squeezing function is not plurisubharmonic. It is natural to consider domains on which the generalized squeezing function or the Fridman invariant is not plurisubharmonic.

The paper is organized as follows. In section \ref{S:invariants}, the generalized squeezing functions and Fridman invariants of some special domains are studied in terms of the Carath\'{e}odory pseudodistance. In section \ref{S:examples}, the precise form of the generalized squeezing functions and Fridman invariants of some example domains are given, including that of the $n$-dimensional annuli. In section \ref{S:nonpsh}, domains with non-plurisubharmonic generalized squeezing function or Fridman invariant are given.

\section{Squeezing functions and Fridman invariants of special domains}\label{S:invariants}

The first type of special domains we consider is constructed by deleting compact subsets from bounded, balanced, convex and homogeneous domains in $\mathbb{C}^n$.

Let $\mathbb D$ be the unit disk in $\cv$. Recall that the Carath\'{e}odory pseudodistance on $\Omega$ is defined as
$$c_\Omega (z,w)=\sup\left\{\tanh^{-1}|\lambda|:\ f\in O(\Omega,\mathbb D),\ f(z)=0,\ f(w)=\lambda \right\}.$$
Let $\Omega$ be a bounded, balanced, convex and homogeneous domain in $\mathbb{C}^n$, and $S\subset \Omega$ a compact subset. Denote
$$d_{c_{\Omega}}^{S}(z)=\min_{w\in S}\tanh[c_{\Omega}(z,w)],\ \ \ z\in \Omega\backslash S.$$

\begin{theorem}\label{KB}
Let $\Omega$ be a bounded, balanced, convex and homogeneous domain in $\mathbb{C}^n$, $n\ge 2$. If $K$ is a compact subset of $\Omega$ and $\Omega\backslash K$ is connected, then we have
$$s^{\Omega}_{\Omega \backslash K}(z)=d_{c_{\Omega}}^{\partial K}(z)=d_{c_{\Omega}}^K(z),\ \ \ z\in \Omega \backslash K.$$
\end{theorem}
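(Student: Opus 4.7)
The plan is to prove the two inequalities $s^{\Omega}_{\Omega \setminus K}(z) \geq d_{c_{\Omega}}^{K}(z)$ and $s^{\Omega}_{\Omega \setminus K}(z) \leq d_{c_{\Omega}}^{\partial K}(z)$ separately, and then close the loop through the purely metric identity $d_{c_{\Omega}}^{K}(z) = d_{c_{\Omega}}^{\partial K}(z)$. Three structural facts will be used throughout: the homogeneity of $\Omega$; the identity $\rho_{\Omega}(w) = \tanh[c_{\Omega}(0, w)]$ valid on every bounded balanced convex domain; and Lempert's theorem $c_{\Omega} = k_{\Omega}$.

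For the lower bound, since $\Omega$ is homogeneous I pick $\phi \in \aut(\Omega)$ with $\phi(z) = 0$ and set $f := \phi|_{\Omega \setminus K}$; this is an injective holomorphic map into $\Omega$ with $f(z) = 0$ and image $\Omega \setminus \phi(K)$. Biholomorphic invariance of $c_{\Omega}$ gives $\rho_{\Omega}(\phi(k)) = \tanh[c_{\Omega}(z, k)]$ for every $k \in K$, so $B^{\rho_{\Omega}}_{n}(0, r) \subset \Omega \setminus \phi(K)$ whenever $r \leq d_{c_{\Omega}}^{K}(z)$, which establishes the lower bound.

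For the upper bound, fix an admissible competitor $f : \Omega \setminus K \to \Omega$, injective and holomorphic with $f(z) = 0$ and $B^{\rho_{\Omega}}_{n}(0, r) \subset f(\Omega \setminus K)$. Since $n \geq 2$ and $f$ is bounded, Hartogs' extension theorem produces $\tilde f : \Omega \to \cv^{n}$. Because $\rho_{\Omega}$ is continuous and plurisubharmonic (being a supremum of moduli of complex linear functionals), the maximum principle applied to $\rho_{\Omega} \circ \tilde f$, which coincides with $\rho_{\Omega} \circ f < 1$ on the open dense subset $\Omega \setminus K$, forces $\tilde f(\Omega) \subset \Omega$. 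The distance-decreasing property of $c_{\Omega}$ then yields $\rho_{\Omega}(\tilde f(w)) \leq \tanh[c_{\Omega}(z, w)]$ for every $w \in \Omega$. The crux is the claim that $\rho_{\Omega}(\tilde f(w)) \geq r$ for $w \in \partial K$. If it failed, $\tilde f(w) \in B^{\rho_{\Omega}}_{n}(0, r) \subset f(\Omega \setminus K)$ would give $\tilde f(w) = f(x)$ for some $x \in \Omega \setminus K$. Since $f$ is a local biholomorphism at $x$ (injective holomorphic between equal-dimensional source and target), $f$ maps some neighborhood $U_{x} \subset \Omega \setminus K$ biholomorphically onto a neighborhood $V$ of $\tilde f(x) = \tilde f(w)$. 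Approximating $w$ by $w_{n} \in \Omega \setminus K$, which is possible because $w \in \partial K$, continuity places $f(w_{n})$ in $V$ for large $n$; injectivity of $f$ on $\Omega \setminus K$ then forces $w_{n} \in U_{x}$, giving $w_{n} \to x$ and contradicting $w_{n} \to w \neq x$. Thus $r \leq \tanh[c_{\Omega}(z, w)]$ for every $w \in \partial K$, i.e.\ $r \leq d_{c_{\Omega}}^{\partial K}(z)$.

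The identity $d_{c_{\Omega}}^{K}(z) = d_{c_{\Omega}}^{\partial K}(z)$ then closes the chain. The inequality $d_{c_{\Omega}}^{K}(z) \leq d_{c_{\Omega}}^{\partial K}(z)$ is immediate from $\partial K \subset K$; for the reverse, convexity of $\Omega$ combined with Lempert's theorem exhibits $c_{\Omega} = k_{\Omega}$ as a length metric realized by complex geodesics, so any complex geodesic from $z \notin K$ to $w \in K$ must first meet $\partial K$ at some $w'$ with $c_{\Omega}(z, w') \leq c_{\Omega}(z, w)$. The main technical obstacle is the upper bound, specifically verifying that the Hartogs extension lands in $\Omega$ rather than $\overline{\Omega}$ and then combining injectivity of $f$ with the boundary assumption $w \in \partial K$ to extract the contradiction above; the remaining steps are direct once the structural ingredients are in place.
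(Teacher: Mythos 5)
Your proposal is correct and follows essentially the same route as the paper: homogeneity plus the identity $\rho_{\Omega}=\tanh c_{\Omega}(0,\cdot)$ for the lower bound, and Hartogs extension, plurisubharmonicity of $\rho_{\Omega}$ with the maximum principle, and the distance-decreasing property for the upper bound, with only cosmetic differences (you justify $d_{c_{\Omega}}^{K}=d_{c_{\Omega}}^{\partial K}$ via complex geodesics where the paper uses real segments and convexity, and you expand the paper's one-line claim that $F(\partial K)\cap f(\Omega\backslash K)=\emptyset$). One micro-correction: $w_{n}\in U_{x}$ alone does not give $w_{n}\to x$; conclude instead from $w_{n}=\left(f|_{U_{x}}\right)^{-1}(f(w_{n}))$ and continuity of the local inverse, since $f(w_{n})\to \tilde f(w)=f(x)$.
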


For the proof of Theorem \ref{KB}, we need the following three lemmas.

\begin{lemma}\cite[Theorem 1]{Barth}\label{snorm}
Let $\Omega$ be a balanced and convex domain in $\Bbb C^{n}$. Then the Minkowski function $\rho_{\Omega} (z)$ of $\Omega$ is plurisubharmonic.
\end{lemma}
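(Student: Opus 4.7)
The plan is to represent $\rho_\Omega$ as a supremum of moduli of complex linear functionals and then invoke the standard stability of plurisubharmonicity under such suprema. First I would record that, because $\Omega$ is balanced and convex, the Minkowski function $\rho_\Omega$ is a (possibly degenerate) complex seminorm on $\cv^n$: homogeneity $\rho_\Omega(\lambda z)=|\lambda|\rho_\Omega(z)$ for $\lambda\in\cv$ comes from $\Omega$ being balanced, and subadditivity comes from convexity together with homogeneity. In particular $\rho_\Omega$ is finite-valued and continuous on $\cv^n$.

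Next, I would use the complex Hahn--Banach theorem to obtain the dual representation
\[
\rho_\Omega(z)=\sup\bigl\{|L(z)|:\ L\in(\cv^n)^{\ast},\ |L(w)|\le \rho_\Omega(w)\ \text{for all}\ w\in\cv^n\bigr\}.
\]
The inequality $\ge$ is immediate. For $\le$, given $z_0\ne 0$ with $c:=\rho_\Omega(z_0)>0$, define a $\cv$-linear functional on $\cv z_0$ by $\lambda z_0\mapsto \lambda c$; on this line it has modulus equal to $\rho_\Omega$, and by the complex Hahn--Banach extension theorem for seminorms it extends to some $L\in(\cv^n)^{\ast}$ with $|L|\le \rho_\Omega$ everywhere and $|L(z_0)|=c$. (If $\rho_\Omega(z_0)=0$ the inequality is trivial.)

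Now for each admissible $L$, the map $z\mapsto |L(z)|$ is plurisubharmonic, being the modulus of a holomorphic function on $\cv^n$. Thus $\rho_\Omega$ is the pointwise supremum of a family of plurisubharmonic functions that is locally bounded above. In general such a supremum is plurisubharmonic only after upper semicontinuous regularization, but here $\rho_\Omega$ is already continuous, so it equals its own regularization and is therefore plurisubharmonic.

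The only delicate point is the Hahn--Banach step in the complex balanced setting rather than the more familiar real half-space version; however, the complex seminorm extension theorem applies verbatim and supplies exactly the dominating linear functional needed. An alternative route, in case one prefers to avoid duality, is to fix a complex line $\zeta\mapsto z_0+\zeta v$ and verify the submean value inequality for $\zeta\mapsto\rho_\Omega(z_0+\zeta v)$ directly from the real convexity and circular invariance of $\rho_\Omega$, but the linear-functional approach above is both shorter and more transparent.
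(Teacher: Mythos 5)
Your proof is correct. Note that the paper does not actually prove this lemma --- it is quoted verbatim as Theorem 1 of Barth's paper --- so there is no in-text argument to compare against; your Hahn--Banach duality representation of $\rho_\Omega$ as $\sup\{|L(z)|: |L|\le\rho_\Omega\}$ over complex-linear functionals is the standard way to establish the result and is essentially the argument in the cited source. Two remarks. First, there is an even shorter route that bypasses duality entirely: since $\Omega$ is convex and contains the origin (being balanced), its Minkowski function is sublinear, hence a convex function on $\cv^n\cong\rv^{2n}$; a continuous convex function is plurisubharmonic, because its restriction to every complex line is convex on $\rv^{2}$ and therefore subharmonic. From this point of view the balanced hypothesis is not needed for plurisubharmonicity at all --- it is only needed to make $\rho_\Omega$ complex-homogeneous, which is what the rest of the paper actually uses (e.g.\ in Lemma 2.3). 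Second, within your duality argument the only delicate points are the ones you flag: the complex (Bohnenblust--Sobczyk) form of Hahn--Banach for seminorms, which does apply verbatim since balancedness gives $\rho_\Omega(\lambda z)=|\lambda|\rho_\Omega(z)$ and hence makes $\lambda z_0\mapsto\lambda c$ dominated by $\rho_\Omega$ on the line $\cv z_0$; and the upper semicontinuity of the supremum, which you correctly dispose of by noting that a finite seminorm on a finite-dimensional space is Lipschitz continuous, so no upper semicontinuous regularization is needed.
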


\begin{lemma}\cite[Proposition 2.3.1 (c)]{Pflug2013}\label{lnc}
Let $\Omega$ be a balanced and convex domain in $\Bbb C^{n}$. Then for all $z\in \Omega$, we have $\rho_\Omega(z)=\tanh[c_\Omega(0,z)]$.
\end{lemma}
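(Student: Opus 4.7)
The plan is to unwind the definition of $c_\Omega$ and reduce the claim to
\[
\sup\bigl\{|f(z)| : f \in O(\Omega, \mathbb{D}),\ f(0) = 0\bigr\} = \rho_\Omega(z),
\]
using that $\tanh$ is monotone and that the supremum in the definition of $c_\Omega(0,z)$ is of $\tanh^{-1}|f(z)|$ over admissible $f$. The case $z = 0$ is trivial, so I assume throughout that $z \in \Omega \setminus \{0\}$, and I establish the two inequalities separately.

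For the upper bound $\tanh[c_\Omega(0,z)] \leq \rho_\Omega(z)$, I would exploit that $\Omega$ is balanced. The holomorphic map $\psi \colon \mathbb{D} \to \mathbb{C}^n$ defined by $\psi(\lambda) := (\lambda/\rho_\Omega(z))\, z$ satisfies $\rho_\Omega(\psi(\lambda)) = |\lambda| < 1$, so it lands in $\Omega$, with $\psi(0) = 0$ and $\psi(\rho_\Omega(z)) = z$. For any admissible $f$, the composition $f \circ \psi$ is a self-map of $\mathbb{D}$ fixing the origin, and the Schwarz lemma gives $|f(\psi(\lambda))| \leq |\lambda|$. Plugging in $\lambda = \rho_\Omega(z)$ yields $|f(z)| \leq \rho_\Omega(z)$, and taking the supremum over $f$ gives the claim.

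For the reverse inequality the strategy is to produce a single admissible $f$ attaining $\rho_\Omega(z)$ via the complex Hahn-Banach theorem. Since $\Omega$ is bounded, balanced and convex, $\rho_\Omega$ is a complex norm on $\mathbb{C}^n$ (c.f.\ \cite[Lemma 3.3]{RY}). On the one-dimensional subspace $\mathbb{C}\, z$ define $\ell_0(\lambda z) := \lambda\, \rho_\Omega(z)$, so that $|\ell_0| = \rho_\Omega$ on $\mathbb{C}\, z$. The complex Hahn-Banach theorem extends $\ell_0$ to a complex linear functional $\ell \colon \mathbb{C}^n \to \mathbb{C}$ dominated by $\rho_\Omega$, that is, $|\ell(w)| \leq \rho_\Omega(w)$ for all $w$. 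This $\ell$ restricts to a holomorphic map $\Omega \to \mathbb{D}$ with $\ell(0) = 0$ and $|\ell(z)| = \rho_\Omega(z)$, hence is an admissible competitor, yielding $\tanh[c_\Omega(0,z)] \geq \rho_\Omega(z)$.

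The only delicate point is the application of the \emph{complex} Hahn-Banach theorem with $\rho_\Omega$ as majorant: one needs $\rho_\Omega$ to be a genuine seminorm (satisfying $\rho_\Omega(\lambda w) = |\lambda| \rho_\Omega(w)$ for all $\lambda \in \mathbb{C}$), not merely positively homogeneous and subadditive. Convexity provides subadditivity and balanced-ness provides full complex homogeneity, so this hypothesis is precisely what the lemma assumes and no real obstacle arises.
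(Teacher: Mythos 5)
The paper offers no proof of this lemma at all --- it is quoted directly from Jarnicki--Pflug --- so the only thing to compare against is the standard argument for that cited result, which is exactly what you give: Schwarz's lemma applied to $f\circ\psi$ yields $\tanh[c_\Omega(0,z)]\le\rho_\Omega(z)$ (note this half uses only that $\Omega$ is balanced), and a Hahn--Banach extension of the tautological functional on $\mathbb{C}\,z$ dominated by $\rho_\Omega$ yields the reverse inequality (this is where convexity enters, via subadditivity of $\rho_\Omega$). Your proof is correct. One small caveat: the lemma as stated does not assume $\Omega$ is bounded, whereas your upper-bound step divides by $\rho_\Omega(z)$ and your lower-bound step explicitly invokes boundedness to make $\rho_\Omega$ a genuine norm. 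Neither is actually needed: the complex Hahn--Banach theorem works verbatim with the seminorm $\rho_\Omega$ as majorant, and if $\rho_\Omega(z)=0$ then the whole line $\mathbb{C}\,z$ lies in $\Omega$, so every competitor $f$ is constant on it by Liouville and $c_\Omega(0,z)=0=\rho_\Omega(z)$. Since every application of the lemma in this paper is to a bounded $\Omega$, this is cosmetic, but it is worth flagging because the hypothesis of boundedness does not appear in the statement you were asked to prove.
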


The following is the well-known Hartogs's extension theorem (see e.g. \cite[Theorem 1.2.6]{Krantz}).

\begin{lemma}\label{Hartogs}
Let $\Omega$ be a domain in $\Bbb C^{n}$ and $K$ a compact subset of $\Omega$ such that $\Omega\backslash K$ is connected. If $f$ is holomorphic on $\Omega\backslash K$, then there exists a holomorphic function $F$ on $\Omega$ such that $F|_{\Omega\backslash K}=f$.
\end{lemma}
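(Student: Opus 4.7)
The plan is to prove this via the $\bar\partial$-method, the standard modern route to the Hartogs extension phenomenon. The conclusion requires $n\geq 2$ (it fails for $n=1$, as $1/z$ on $\dv\setminus\{0\}$ shows), which I assume implicitly. The overall strategy is first to produce a smooth extension $\tilde f$ of $f$ to all of $\Omega$, then correct it to a holomorphic extension by solving a compactly supported inhomogeneous $\bar\partial$-equation on $\cv^n$.

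First I would choose an open set $U$ with $K\subset U$, $\overline U\subset \Omega$, and a cutoff $\chi\in C^\infty(\Omega)$ with $\chi\equiv 1$ in a neighborhood of $K$ and $\operatorname{supp}(\chi)\subset U$. Define $\tilde f:\Omega\to\cv$ by $(1-\chi)f$ on $\Omega\setminus K$ and $0$ on $K$; this is smooth on $\Omega$ since $1-\chi$ vanishes near $K$. A direct computation gives $\bar\partial\tilde f = -f\,\bar\partial\chi$, which is a smooth, $\bar\partial$-closed $(0,1)$-form with compact support inside $U\setminus K$; extend it by zero to a form $g$ on all of $\cv^n$.

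Next I would solve $\bar\partial u = g$ globally on $\cv^n$ by the one-variable Cauchy-type integral
\[
u(z_1,\ldots,z_n) = \frac{1}{2\pi i}\int_{\cv}\frac{g_1(\zeta,z_2,\ldots,z_n)}{\zeta-z_1}\,d\zeta\wedge d\bar\zeta,
\]
where $g_1$ is the $d\bar z_1$-coefficient of $g$. Differentiation under the integral together with the $\bar\partial$-closedness of $g$ yields $\bar\partial u = g$. For $(z_2,\ldots,z_n)$ outside the projection of $\operatorname{supp}(g)$, the slice $z_1\mapsto u(z_1,z_2,\ldots,z_n)$ is entire and vanishes at infinity, hence is identically zero; this step genuinely uses $n\geq 2$. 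Consequently $u$ has compact support in $\cv^n$ and is holomorphic off $\operatorname{supp}(g)$; in particular $u\equiv 0$ on the unbounded component of $\cv^n\setminus\operatorname{supp}(g)$.

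Finally, set $F:=\tilde f - u$ on $\Omega$, so $\bar\partial F = 0$ and $F\in\oc(\Omega)$. The unbounded component of $\cv^n\setminus\operatorname{supp}(g)$ contains the complement of a large ball, and for $U$ chosen tight enough around $K$ it intersects $\Omega\setminus K$ in a nonempty open set $W$; on $W$ we have $\chi=0$ and $u=0$, so $F = \tilde f = f$. Since $\Omega\setminus K$ is connected by hypothesis, the identity principle propagates the equality $F=f$ to all of $\Omega\setminus K$. The main obstacle is producing the \emph{compactly supported} solution of $\bar\partial u = g$; this is precisely the step that fails for $n=1$ and is what makes the Hartogs extension phenomenon nontrivial.
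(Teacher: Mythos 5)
The paper does not actually prove this lemma: it is quoted as the classical Hartogs extension theorem with a pointer to Krantz's book, so there is no internal argument to compare yours against. Your proposal is the standard Ehrenpreis $\bar\partial$-proof of that classical result, and it is correct in outline; you are also right that the hypothesis $n\ge 2$ is needed even though the statement as printed omits it (the paper only ever invokes the lemma for $n\ge 2$, switching to Riemann's removable singularity theorem in the one place where $n\ge 1$ is allowed).

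One step deserves tightening: the production of the nonempty open set $W$ on which $F=f$. You argue that the unbounded component of $\cv^{n}\setminus\operatorname{supp}(g)$ contains the complement of a large ball and hence meets $\Omega\setminus K$; but if $\Omega$ is bounded, the complement of a large ball is disjoint from $\Omega$, so this observation does not by itself yield $W$, and ``choosing $U$ tight enough'' needs to be made precise. The standard repair uses the projection you already introduced: let $\pi(z)=(z_{2},\dots,z_{n})$. The compact set $\pi(K)$ cannot contain the open set $\pi(\Omega)$, since otherwise $\pi(\Omega)$ would be a nonempty compact open subset of the connected, noncompact space $\cv^{n-1}$ (here $n\ge 2$ is used again). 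Pick $w\in\pi(\Omega)\setminus\pi(K)$ and shrink $U$ so that $\pi(\overline{U})$ still misses $w$. Any $z\in\Omega$ with $\pi(z)=w$ then lies in $\Omega\setminus K$ and satisfies $\chi(z)=0$ and $u(z)=0$, the latter because $g_{1}(\,\cdot\,,\pi(z))\equiv 0$, and a small neighborhood of such a $z$ serves as $W$. With this point made explicit, your argument is complete.
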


\begin{proof}[\textbf{Proof of Theorem \ref{KB}}]
Let $z\in \Omega \backslash K$. Since $\Omega$ is homogeneous, there exists $\psi \in \aut(\Omega)$ such that $\psi(z)=0$. Then $c_{\Omega}(z,w)=c_{\psi(\Omega)}(\psi(z),\psi(w))=c_{\Omega}(0,\psi(w))$.

By Lemma \ref{lnc}, we have $\rho_{\Omega}(w)=\tanh c_{\Omega}(0,\psi(w))$. Since $\Omega$ is convex, we have
$$\left\{tz+(1-t)w:\ 0\leq t\leq 1,\ z\in \Omega\backslash K,\ w \in \mathring{K} \right\} \cap \partial{K} \neq \emptyset.$$
This implies that $d_{c_{\Omega}}^{\partial K}(v)=d_{c_{\Omega}}^{K}(v)$, for all $v\in \Omega \backslash K$. Therefore,
$$B^{\rho_{\Omega}}_{n}(0,d_{c_{\Omega}}^{\partial K}(z))\subset \psi(\Omega),$$
where $B^{\rho_{\Omega}}_{n}(0,d_{c_{\Omega}}^{\partial K}(z))=\{v\in \mathbb{C}^{n}: \rho_{\Omega}(v)<d_{c_{\Omega}}^{\partial K}(z)\}$. It follows that $s^{\Omega}_{\Omega\backslash K}(z)\ge d_{c_{\Omega}}^{\partial K}(z)$.

Next we show that $s^{\Omega}_{\Omega\backslash K}(z)\leq d_{c_{\Omega}}^{\partial K}(z)$. Let $f:\Omega\backslash K  \rightarrow \Omega$ be a holomorphic embedding such that $f(z)=0$. By Lemma \ref{Hartogs}, there exists a holomorphic mapping $F:\Omega  \rightarrow \mathbb{C}^n$ such that $\left.F\right|_{\Omega \backslash K}=f$.

By Lemma \ref{snorm}, we have that $\rho_{\Omega}(z)$ is plurisubharmonic. Hence $\rho_{\Omega}(F(z))$ is a plurisubharmonic function on $\Omega$. By the maximum modulus principle for plurisubharmonic functions, we have $\rho_{\Omega}(F(K))<1$. Hence $F(\Omega)\subset \Omega$.

Since $f$ is injective holomorphic, it is easy to see that $F(\partial K)\cap F(\Omega \backslash K)=\emptyset$. By the decreasing property of the Catath\'{e}odory pseudodistance, we have
$$c_{\Omega}(0,F(\partial K))=c_{\Omega}(F(z),F(\partial K))\leq c_{\Omega}(z,\partial K).$$
This implies that
$$s^{\Omega}_{\Omega\backslash K}(z)\leq d_{c_{\Omega}}^{\partial K}(z).$$
Hence $s^{\Omega}_{\Omega\backslash K}(z)=d_{c_{\Omega}}^{\partial K}(z)$.
\end{proof}

Recently, Nikolov-Verma \cite{Nikolov-Verma2018} have proved that $s_{D}(z)\leq \tilde{e}_{D}(z)\leq e_{D}(z)$. This still holds for the generalized squeezing function (cf. \cite[Theorem 4.1]{RY}).

\begin{lemma}\label{es}
Let $\Omega$ be a bounded, balanced, convex and homogeneous domain and $D$ a bounded domain in $\mathbb{C}^n$. Then, we have
$$s_D^\Omega(z)\leq \tilde{e}^{\Omega}_{D}(z)\leq e^{\Omega}_{D}(z),\ \ \ z\in D.$$
\end{lemma}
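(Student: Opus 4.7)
The plan is to prove the two inequalities in turn. The upper bound $\tilde{e}^{\Omega}_{D}(z) \leq e^{\Omega}_{D}(z)$ is immediate from the standard domination $c_D \leq k_D$: this pointwise inequality gives the reverse inclusion of balls $B^{k}_{D}(z,r) \subset B^{c}_{D}(z,r)$, so any injective $f \in O(\Omega, D)$ whose image contains the Carath\'eodory ball $B^{c}_{D}(z,r)$ automatically contains the smaller Kobayashi ball, and hence every value $\tanh(r)$ admissible for $\tilde{e}^{\Omega}_{D}$ is admissible for $e^{\Omega}_{D}$.

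For the lower bound $s_D^\Omega(z) \leq \tilde{e}^{\Omega}_{D}(z)$, I would fix $r < s_D^\Omega(z)$ and select an injective $f \in O(D, \Omega)$ with $f(z)=0$ and $B^{\rho_{\Omega}}_{n}(0,r) \subset f(D)$. Since $\rho_{\Omega}$ is a norm (as cited before Lemma \ref{snorm}), the ball $B^{\rho_{\Omega}}_{n}(0,r)$ is exactly the image of $\Omega$ under the homothety $w \mapsto rw$, so I can define
$$g : \Omega \to D, \qquad g(w) = f^{-1}(rw),$$
which is a well-defined injective holomorphic map. The remaining claim is $B^{c}_{D}(z, \tanh^{-1} r) \subset g(\Omega)$. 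Given $w \in D$ with $c_D(z,w) < \tanh^{-1} r$, the contracting property of the Carath\'eodory pseudodistance under $f$, together with $f(z)=0$, gives $c_\Omega(0, f(w)) \leq c_D(z,w) < \tanh^{-1} r$, and Lemma \ref{lnc} converts this to $\rho_{\Omega}(f(w)) < r$. Hence $f(w) \in B^{\rho_{\Omega}}_{n}(0,r)$, so $w \in g(\Omega)$. This yields $\tilde{e}^{\Omega}_{D}(z) \geq r$; letting $r \nearrow s_D^\Omega(z)$ finishes the argument.

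There is essentially no serious obstacle here; both inequalities reduce to direct bookkeeping once one sees the key move. The only subtlety worth flagging is the reversal of direction, since the generalized squeezing function is defined via maps $D \to \Omega$ while the Fridman invariant uses maps $\Omega \to D$. The correct construction is to pass to $f^{-1}$ on the ball $B^{\rho_{\Omega}}_{n}(0,r) \subset f(D)$ and precompose with the scaling $\Omega \to B^{\rho_{\Omega}}_{n}(0,r)$, after which Lemma \ref{lnc} furnishes the translation between $\rho_{\Omega}$ and $c_{\Omega}$ that the first inequality requires.
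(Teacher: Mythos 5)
Your proposal is correct and matches the paper's argument essentially step for step: the first inequality from $c_D\le k_D$, and the second by constructing $g(w)=f^{-1}(rw)$ and using the decreasing property of the Carath\'eodory pseudodistance together with Lemma \ref{lnc} to identify $B^{c}_{\Omega}(0,\tanh^{-1}r)$ with $B^{\rho_{\Omega}}_{n}(0,r)$. The only cosmetic difference is that you apply the contraction property pointwise to $f:D\to\Omega$, whereas the paper phrases it as an inclusion of Carath\'eodory balls via $f(D)\subset\Omega$; these are the same argument.
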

\begin{proof}
Since $c_{D}\leq k_{D}$, it is easy to see that $\tilde{e}^{\Omega}_{D}(z)\leq e^{\Omega}_{D}(z)$. Now it suffices to prove that $s_D^\Omega(z)\leq \tilde{e}^{\Omega}_{D}(z)$.

Let $z\in D$ and $0<\epsilon<s_D^\Omega(z)$. Then, there exists a holomorphic embedding $f:D \rightarrow \Omega$ such that $f(z)=0$ and $B^{\rho_{\Omega}}_{n}(0,s_D^\Omega(z)-\epsilon) \subset f(D)$.

Set $g(w):=f^{-1}((s_D^\Omega(z)-\epsilon)w)$, which is an injective holomorphic mapping from $\Omega$ into $D$, with $g(0)=z$. By the decreasing property of the Carath\'{e}odory pseudodistance, we have
$$B^{c}_{f(D)}(0,\tanh^{-1}[s_D^\Omega(z)-\epsilon])\subset B^{c}_{\Omega}(0,\tanh^{-1}[s_D^\Omega(z)-\epsilon]).$$

By Lemma \ref{lnc}, we have
\begin{align*}
B^{c}_{\Omega}(0,\tanh^{-1}[s_D^\Omega(z)-\epsilon])
&=\{v:c_{\Omega}(0,v)<\tanh^{-1}[s_D^\Omega(z)-\epsilon]\} \\
&=\{v:\tanh[c_{\Omega}(0,v)]<s_D^\Omega(z)-\epsilon \} \\
&=\{v:\rho_{\Omega}(v)<s_D^\Omega(z)-\epsilon\} \\
&=B^{\rho_{\Omega}}_{n}(0,s_D^\Omega(z)-\epsilon)
\end{align*}
Thus, we get
\begin{align*}
B^{c}_{D}(z,\tanh^{-1}[s_D^\Omega(z)-\epsilon])&=f^{-1}(B^{c}_{f(D)}(z,\tanh^{-1}[s_D^\Omega(z)-\epsilon]))\\
&\subset f^{-1}(B^{\rho_{\Omega}}_{n}(0,s_D^\Omega(z)-\epsilon))=g(\Omega)
\end{align*}
This implies that $\tilde{e}^{\Omega}_{D}(z)\ge s_D^\Omega(z)-\epsilon$. Since $\epsilon$ is arbitrary, we have $\tilde{e}^{\Omega}_{D}(z)\ge s_D^\Omega(z)$.
\end{proof}

It is then natural and interesting to study when $s_D^\Omega(z)=\tilde{e}^{\Omega}_{D}(z)$. For domains $D=\Omega\backslash K$, where $K$ is a proper analytic subset or some special compact subset of $\Omega$, this is indeed the case.

\begin{theorem}\label{man}
Let $\Omega$ be a bounded, balanced, convex and homogeneous domain in $\mathbb{C}^n$, $n\ge 2$, and $G\subset \subset \Omega$ a subdomain such that  pseudoconvex  but not Levi flat points are dense in $\partial G$. If $D=\Omega \backslash \overline{G}$ is connected, then we have
$$s_D^\Omega(z)=\tilde{e}^{\Omega}_{D}(z)=d_{c_{\Omega}}^{\partial G}(z),\ \ \ z\in D.$$
\end{theorem}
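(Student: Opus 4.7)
The plan is to isolate the hard inequality $\tilde{e}^{\Omega}_{D}(z)\le d_{c_\Omega}^{\partial G}(z)$ and reduce everything else to Theorem \ref{KB} and Lemma \ref{es}. Applying Theorem \ref{KB} with the compact set $K=\overline{G}$ (which is compact in $\Omega$, with $D=\Omega\setminus\overline{G}$ connected by hypothesis) gives $s^{\Omega}_{D}(z)=d_{c_\Omega}^{\partial\overline{G}}(z)=d_{c_\Omega}^{\partial G}(z)$, where the last equality uses $\partial\overline{G}=\partial G$ for the subdomain $G$. Combined with Lemma \ref{es}, this already yields $\tilde{e}^{\Omega}_{D}(z)\ge s^{\Omega}_{D}(z)=d_{c_\Omega}^{\partial G}(z)$, so only the reverse inequality requires real work.

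For the upper bound, the key preliminary observation is that Lemma \ref{Hartogs} together with the maximum modulus principle (each $h\in O(D,\mathbb{D})$ extends by Hartogs to some $H\in O(\Omega,\mathbb{C})$, and the max principle on the compact set $\overline{G}$ forces $|H|<1$ on $\Omega$ whenever $h$ is non-constant) gives $O(D,\mathbb{D})=\{F|_D:F\in O(\Omega,\mathbb{D})\}$, hence $c_D=c_\Omega|_D$ and $B^{c}_{D}(z,r)=B^{c}_{\Omega}(z,r)\cap D$ for every $r>0$. Assume for contradiction that some injective $f\in O(\Omega,D)$ satisfies $B^{c}_{D}(z,r)\subset f(\Omega)$ and $\tanh r>d_{c_\Omega}^{\partial G}(z)$. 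Then $\{p\in\partial G:c_\Omega(z,p)<r\}$ is a nonempty relatively open subset of $\partial G$, so by density I can pick in it a point $p_0$ at which $\partial G$ is pseudoconvex and not Levi-flat. For a small connected neighborhood $V\subset\subset\Omega$ of $p_0$ with $V\subset B^{c}_{\Omega}(z,r)$ one has $V\cap D\subset f(\Omega)$, so $f^{-1}$ is a holomorphic embedding on $V\cap D$; by the classical Hartogs--Levi extension at the pseudoconvex non-Levi-flat point $p_0$ it extends to a holomorphic map $\tilde{g}:V\to\mathbb{C}^{n}$ (after possibly shrinking $V$).

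The desired contradiction is then extracted from the sublevel set $U=\{w\in V:\rho_\Omega(\tilde{g}(w))<1\}$. Since $\rho_\Omega$ is plurisubharmonic by Lemma \ref{snorm}, the pullback $\rho_\Omega\circ\tilde{g}$ is plurisubharmonic on the pseudoconvex domain $V$, so $U$ is pseudoconvex. Moreover, $V\cap D\subset U$, and the identity principle shows that $V\cap D$ is in fact a connected component of $U$: if $C$ denotes the component containing the connected set $V\cap D$, then $f\circ\tilde{g}$ is holomorphic on $C$ and agrees with the identity on the open subset $V\cap D$, so $f\circ\tilde{g}=\mathrm{id}$ on $C$ by the identity principle; since $f(\Omega)\subset D$ this forces $C\subset V\cap D$, hence $C=V\cap D$. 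Because every connected component of a pseudoconvex open set is pseudoconvex, $V\cap D$ is pseudoconvex at its smooth boundary point $p_0\in\partial G$. However, if $\rho_G$ is a local smooth defining function for $G$ near $p_0$ (with $G=\{\rho_G<0\}$), then $V\cap D=\{-\rho_G<0\}$ locally, and the Levi form of the defining function $-\rho_G$ at $p_0$ equals $-L(\rho_G)$, which has a strictly negative eigenvalue by the assumption that $\partial G$ is pseudoconvex and not Levi-flat at $p_0$. This contradicts the pseudoconvexity of $V\cap D$ at $p_0$ and closes the argument. The main obstacle is carrying out the Hartogs--Levi extension of $f^{-1}$ across $p_0$ rigorously and verifying that $V\cap D$ is indeed a connected component of $U$; once those are in place, the rest is a clean collision between the plurisubharmonicity of $\rho_\Omega$ and the non-Levi-flat pseudoconvex geometry of $\partial G$.
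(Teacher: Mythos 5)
Your proposal is correct, and its skeleton matches the paper's: reduce to the single inequality $\tilde{e}^{\Omega}_{D}(z)\leq d_{c_{\Omega}}^{\partial G}(z)$ via Theorem \ref{KB} and Lemma \ref{es}, use the Hartogs extension plus the maximum principle to get $c_D=c_\Omega$ on $D$ and hence a full Euclidean ball around a nearest boundary point inside $B^{c}_{\Omega}(z,r)$, pick by density a pseudoconvex but not Levi-flat point of $\partial G$ in that ball, and derive a sign contradiction in the Levi form of the local defining function. Where you genuinely diverge is in how you certify pseudoconvexity of the relevant piece of $f(\Omega)$ near that point: the paper simply notes that $f(\Omega)$ is pseudoconvex (being biholomorphic to the convex domain $\Omega$) and that $-\rho$ is a local defining function for $f(\Omega)$ there (since $f(\Omega)$ coincides with $D=\Omega\backslash\overline{G}$ on the small ball), so Levi pseudoconvexity at the $C^2$ boundary point applies directly; you instead extend $f^{-1}$ across $p_0$ by the Levi extension theorem, form the plurisubharmonic sublevel set $U=\{\rho_\Omega\circ\tilde{g}<1\}$, and identify $V\cap D$ as a connected component of $U$ via the identity principle, obtaining local pseudoconvexity that way. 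Both arguments are sound; yours is longer and re-derives locally what the paper gets for free from global pseudoconvexity of $f(\Omega)$, but it is more self-contained and makes explicit the step, dismissed as ``clear'' in the paper, that $f(\Omega)$ agrees with $D$ near the chosen boundary point. One cosmetic remark: your appeal to $\partial\overline{G}=\partial G$ in the first paragraph is unnecessary (and can fail for general subdomains); the inclusions $\partial\overline{G}\subset\partial G\subset\overline{G}$ together with the equality $d_{c_{\Omega}}^{\partial\overline{G}}=d_{c_{\Omega}}^{\overline{G}}$ from Theorem \ref{KB} already sandwich $d_{c_{\Omega}}^{\partial G}$ between two equal quantities.
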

\begin{proof}
By Theorem \ref{KB} and Lemma \ref{es}, it suffices to show that $\tilde{e}^{\Omega}_{D}(z)\leq d_{c_{\Omega}}^{\partial G}(z)$.

Let $P\in \partial G$ such that $d_{c_{\Omega}}^{\partial G}(z)=\tanh[c_{\Omega}(z,P)]$. Suppose that $\tilde{e}^{\Omega}_{D}(z)>d_{c_{\Omega}}^{\partial G}(z)$. Then, there exist $r>d_{c_{\Omega}}^{\partial G}(z)$ and a holomorphic embedding $f:\Omega \rightarrow D$ such that $f(0)=z$ and $B^{c}_{D}(z,r)\subset f(\Omega)$. By Lemma \ref{Hartogs}, we have $c_{D}(z_1,z_2)=c_{\Omega}(z_1,z_2)$, for all $z_1, z_2 \in D$.

Since the Carath\'{e}odory pseudodistance is continuous (see e.g. \cite{Pflug2013}), we know that $B^{c}_{D}(z,r)$ and $B^{c}_{\Omega}(z,r)$ are open. It follows that there exists $\delta>0$ such that $B^{n}(P,\delta)\subset B^{c}_{\Omega}(z,r)$. Since $B^{c}_{D}(z,r)\subset f(\Omega)\subset D$ and $c_{D}(z_1,z_2)=c_{\Omega}(z_1,z_2)$, we have $B^{n}(P,\delta)\cap  f(\Omega) \neq \emptyset$ and $B^{n}(P,\delta)\cap \partial G\subset \partial (f(\Omega))$. On the other hand, since pseudoconvex  but not Levi flat points are dense in $\partial G$, there exist a local $C^2$ defining function $\rho$ and $Q\in B^{n}(P,\delta)\cap \partial G$ such that
\[
\sum_{j, k=1}^{n} \frac{\partial^{2} \rho}{\partial z_{j} \partial \bar{z}_{k}}(Q) v_{j} \bar{v}_{k} > 0,
\]
for some $v \in \mathbb{C}^{n}$ satisfying
\[
\sum_{j=1}^{n} \frac{\partial \rho}{\partial z_{j}}(Q) v_{j}=0.
\]
However, $f(\Omega)$ is pseudoconvex and it is clear that $-\rho(z)$ is a local defining function on some neighborhood of $Q$ for $f(\Omega)$. It follows that
\[
\sum_{j, k=1}^{n} \frac{\partial^{2} (-\rho)}{\partial z_{j} \partial \bar{z}_{k}}(Q) v_{j} \bar{v}_{k} \ge 0,
\]
for all $v \in \mathbb{C}^{n}$ satisfying
\[
\sum_{j=1}^{n} \frac{\partial (-\rho)}{\partial z_{j}}(Q) v_{j}=0,
\]
which is a contradiction. Hence $\ds \tilde{e}^{\Omega}_{D}(z)\leq d_{c_{\Omega}}^{\partial G}(z)$.
\end{proof}

\begin{remark}
There are many bounded domains with dense pseudoconvex  but not Levi flat points in the boundary, such as strongly pseudoconvex domains, pseudoconvex domains with real analytic boundary, pseudoconvex domains of finite type, etc.
\end{remark}

The proof of the next result is similar to that of Theorem \ref{man}, and we only outline the main differences.

\begin{theorem}\label{bdary}
Let $\Omega$ be a bounded, balanced, convex and homogeneous domain in $\mathbb{C}^n, n\ge 2$, and $G\subset \subset \Omega$ a subdomain such that pseudoconvex  but not Levi flat  points are dense in $\partial G$. If $K\subset \partial G$ is an open subset and $D=\Omega \backslash \overline{K}$ is connected, then we have
$$s_D^\Omega(z)=\tilde{e}^{\Omega}_{D}(z)=d_{c_{\Omega}}^{\overline K}(z),\ \ \ z\in D.$$
\end{theorem}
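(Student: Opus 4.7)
The plan is to mirror the three-step strategy of the proof of Theorem \ref{man}: combine Theorem \ref{KB} with Lemma \ref{es} to reduce everything to the single upper bound $\tilde{e}^{\Omega}_D(z)\le d^{\overline K}_{c_\Omega}(z)$, and then argue by contradiction via a pseudoconvex but not Levi-flat point of $\partial G$, chosen to lie inside the open piece $K$. Applying Theorem \ref{KB} with the compact set $\overline K\subset\Omega$ (permitted since $D=\Omega\setminus\overline K$ is connected by assumption) gives $s^{\Omega}_D(z)=d^{\overline K}_{c_\Omega}(z)$, and Lemma \ref{es} gives $s^{\Omega}_D(z)\le \tilde{e}^{\Omega}_D(z)$, so only the upper bound remains.

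Suppose for contradiction that $\tilde{e}^{\Omega}_D(z)>d^{\overline K}_{c_\Omega}(z)$. Exactly as in the proof of Theorem \ref{man}, I obtain $r>d^{\overline K}_{c_\Omega}(z)$ and a holomorphic embedding $f:\Omega\to D$ with $f(0)=z$ and $B^{c}_{D}(z,r)\subset f(\Omega)$, and Lemma \ref{Hartogs} applied to $\overline K$ gives $c_D=c_\Omega|_D$, whence $B^{c}_{\Omega}(z,r)\cap D\subset f(\Omega)$. Since $K$ is open in $\partial G$ and $\overline K\subset\partial G$, $K$ is dense in $\overline K$, so by continuity of $c_\Omega$ I may pick $P\in K$ with $\tanh c_\Omega(z,P)<r$ and then a $\delta>0$ with $B^{n}(P,\delta)\subset B^{c}_{\Omega}(z,r)$ and $B^{n}(P,\delta)\cap\partial G\subset K$. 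For this $\delta$, $B^{n}(P,\delta)\cap\overline K=B^{n}(P,\delta)\cap\partial G$, so $B^{n}(P,\delta)\cap D$ coincides locally with $B^{n}(P,\delta)\setminus\{\rho=0\}$ for a local $C^2$ defining function $\rho$ of $G$.

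Density of pseudoconvex but not Levi-flat points in $\partial G$ then yields $Q\in B^{n}(P,\delta)\cap K$ with strictly positive Levi form of $\rho$ on some complex tangent vector $v$. The crucial difference from Theorem \ref{man} is that here $f(\Omega)$ contains a neighborhood of $Q$ minus $\{\rho=0\}$ \emph{on both sides} of $\partial G$, so $-\rho$ is no longer a one-sided local defining function for $f(\Omega)$ and the direct Levi-form contradiction of Theorem \ref{man} is unavailable. The main obstacle is precisely to replace that step, which I plan to do by a Kontinuit\"atssatz argument. Perturbing $\zeta\mapsto Q+\zeta v$ by a second-order holomorphic correction that kills the $\Re[A\zeta^2]$ term in the Taylor expansion of $\rho$, I construct an analytic disc $\phi$ with $\phi(0)=Q$, $\phi'(0)=v$, and $\rho(\phi(\zeta))\ge c|\zeta|^2-O(|\zeta|^3)$, so that $\phi(\{0<|\zeta|\le\epsilon\})\subset\{\rho>0\}\subset f(\Omega)$. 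The shifted family $\phi_s(\zeta)=\phi(\zeta)+s t_0\nabla\rho(Q)$ for $s\in[0,1]$ and $t_0>0$ small then has boundary circles $\phi_s(\{|\zeta|=\epsilon\})$ in a common compact subset of $\{\rho>0\}\subset f(\Omega)$ and endpoint $\phi_1(\{|\zeta|\le\epsilon\})\subset f(\Omega)$. Since $f(\Omega)$ is pseudoconvex (being biholomorphic to the pseudoconvex domain $\Omega$), the Kontinuit\"atssatz forces $\phi_s(\{|\zeta|\le\epsilon\})\subset f(\Omega)$ for every $s\in[0,1]$, contradicting $\phi_0(0)=Q\notin f(\Omega)$. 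Alternatively, one may close the argument by invoking the Lewy extension theorem to show that $f(\Omega)$ cannot be a domain of holomorphy at $Q$ when $\{\rho=0\}$ is not Levi flat there.
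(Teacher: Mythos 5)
Your proposal is correct and follows the paper's overall skeleton (reduce to the single upper bound via Theorem \ref{KB} and Lemma \ref{es}, set up the contradiction, use Lemma \ref{Hartogs} to get $B^{n}(P,\delta)\setminus\overline{K}\subset f(\Omega)$, and locate a pseudoconvex but not Levi-flat point $Q$ of $\partial G$ inside the open piece $K$); the only genuine divergence is in how you extract the final contradiction. You correctly identify the real issue --- near $Q$ the set $f(\Omega)$ contains \emph{both} sides of $\{\rho=0\}$, so $-\rho$ is not a local defining function for $f(\Omega)$ and the Levi-form inequality of Theorem \ref{man} cannot be applied to $f(\Omega)$ directly --- but you conclude too quickly that the paper's argument is ``unavailable.'' The paper's fix is simply to pass to the outer connected component $G_{1}=B^{n}(Q,\delta_{1})\setminus\overline{G}$ of $B^{n}(Q,\delta_{1})\cap f(\Omega)$: a connected component of an intersection of pseudoconvex open sets is pseudoconvex, and $-\rho$ \emph{is} a one-sided local defining function for $G_{1}$ at $Q$, so the Levi-form contradiction of Theorem \ref{man} goes through verbatim for $G_{1}$. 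Your replacement --- a second-order-corrected analytic disc tangent to $\partial G$ at $Q$ from the side $\{\rho>0\}$, pushed along the normal and controlled by the Kontinuit\"atssatz for the pseudoconvex domain $f(\Omega)$ --- is a standard and correct alternative (as is the Lewy-extension variant you mention in passing), at the cost of more machinery: you must actually construct the disc, verify that all boundary circles stay in a fixed compact subset of $\{\rho>0\}\cap B^{n}(Q,\delta_{1})\subset f(\Omega)$, and run the open--closed argument in the parameter $s$. Both routes exploit exactly the same two facts, namely the strict pseudoconvexity of $\partial G$ at $Q$ and the pseudoconvexity of $f(\Omega)$; the paper's component trick is shorter, while yours avoids having to isolate a pseudoconvex piece of $f(\Omega)$ with $C^{2}$ boundary at $Q$.
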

\begin{proof}
By Theorem \ref{KB} and Lemma \ref{es}, it suffices to show that $\tilde{e}^{\Omega}_{D}(z)\leq d_{c_{\Omega}}^{\overline K}(z)$.

Let $P\in \overline{K}$ such that $d_{c_{\Omega}}^{\overline{K}}(z)=\tanh[c_{\Omega}(z,P)]$. Suppose that $\tilde{e}^{\Omega}_{D}(z)>d_{c_{\Omega}}^{\overline{K}}(z)$. Then, there exist $r>d_{c_{\Omega}}^{\overline K}(z)$ and a holomorphic embedding $f:\Omega \rightarrow D$ such that $f(0)=z$ and $B^{c}_{D}(z,r)\subset f(\Omega)$. Then, by Lemma \ref{Hartogs}, there exists $\delta>0$ such that $B^{n}(P,\delta)\backslash \overline{K} \subset f(\Omega)$.

Since the pseudoconvex  but not Levi flat  points are dense in $\partial G$, there exist a pseudoconvex  but not Levi flat  point $Q\in B^{n}(P,\delta)\cap K$ and $\delta_{1}>0$ such that $B^{n}(Q,\delta_{1})\cap \partial{G} \subset K$ and $B^{n}(Q,\delta_{1})\subset B^{n}(P,\delta)$. Let $G_{1}=B^{n}(Q,\delta_{1})\backslash\overline{G}$. It is clear that $G_{1}$ is a connected component of $B^{n}(Q,\delta_{1})\cap f(\Omega)$, thus pseudoconvex. Then the same argument as in the proof of Theorem \ref{man} gives a contradiction.
\end{proof}

Using similar ideas, replacing Hartogs's extension theorem by Riemann's removable singularity theorem, we also have the following result.

\begin{theorem}\label{ati}
Let $\Omega$ be a bounded, balanced, convex and homogeneous domain in $\mathbb{C}^n$, $n\ge 1$, and $H$ a proper analytic subset of $\Omega$. Let $A$ be a subset of $H$, and set $D=\Omega\backslash \overline A$. Then for any $z\in D$, we have
$$s^{\Omega}_D(z)=\tilde{e}^{\Omega}_D(z)=d_{c_{\Omega}}^{\overline A}(z).$$
Moreover, if $n\ge 2$ and $H$ is of codimension at least two, then
$$\tilde{e}^{\Omega}_D(z)=e^{\Omega}_D(z),\ \ \ z\in D.$$
\end{theorem}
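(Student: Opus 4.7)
The plan is to follow the structure of the proofs of Theorems \ref{KB} and \ref{man}, substituting Hartogs's extension theorem with Riemann's removable singularity theorem. The first identity $s^\Omega_D(z) = \tilde{e}^\Omega_D(z) = d^{\overline A}_{c_\Omega}(z)$ splits into a lower bound and an upper bound, with $s^\Omega_D$ and $\tilde{e}^\Omega_D$ sandwiched via Lemma \ref{es}.

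For the lower bound $s^\Omega_D(z) \geq d^{\overline A}_{c_\Omega}(z)$, I would argue verbatim as in the first half of the proof of Theorem \ref{KB}: homogeneity yields $\psi \in \aut(\Omega)$ with $\psi(z) = 0$, and Lemma \ref{lnc} shows that the $\rho_\Omega$-ball of radius $d^{\overline A}_{c_\Omega}(z)$ at the origin is disjoint from $\psi(\overline A)$, hence lies in $\psi(D)$.

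For the upper bound $\tilde{e}^\Omega_D(z) \leq d^{\overline A}_{c_\Omega}(z)$, the first step is to establish $c_D = c_\Omega$ on $D \times D$: any bounded holomorphic function on $D$ restricts to a bounded holomorphic function on $\Omega \setminus H$, which extends holomorphically to $\Omega$ by Riemann's removable singularity theorem, and the extension agrees with the original on $D$ by the identity theorem at points of $H \setminus \overline A$. Arguing by contradiction, suppose $\tilde{e}^\Omega_D(z) > d^{\overline A}_{c_\Omega}(z)$: then there exist an injective holomorphic $f: \Omega \to D$ with $f(0) = z$, a radius $r$ with $B^c_D(z, r) \subset f(\Omega)$, and a point $P \in \overline A$ with $c_\Omega(z, P) < r$. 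The bounded holomorphic inverse $f^{-1}$ is defined on $B^c_\Omega(z, r) \setminus H$ and, by Riemann, extends to a holomorphic map $g$ on $B^c_\Omega(z, r)$. Lemma \ref{snorm} and the maximum principle applied to the plurisubharmonic function $\rho_\Omega \circ g$ confirm that $g$ takes values in $\Omega$. The identity theorem then propagates $f \circ g = \mathrm{id}$ from $B^c_\Omega(z, r) \setminus H$ to all of $B^c_\Omega(z, r)$, so $P = f(g(P)) \in f(\Omega) \subset D$, contradicting $P \in \overline A$.

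For the final assertion, since $\Omega$ is bounded and convex, Lempert's theorem gives $c_\Omega = k_\Omega$, so combined with $c_D = c_\Omega$ the equality $\tilde{e}^\Omega_D = e^\Omega_D$ reduces to showing $k_D = k_\Omega$ on $D \times D$. This follows from the standard fact that removing a codimension $\geq 2$ analytic subset leaves the Kobayashi pseudodistance unchanged, i.e., $k_{\Omega \setminus H} = k_\Omega$ on $(\Omega \setminus H)^2$; Kobayashi monotonicity on the sandwich $\Omega \setminus H \subset D \subset \Omega$ then pins down $k_D = k_\Omega$ on $(\Omega \setminus H)^2$, and continuity of both pseudodistances propagates the equality to all of $D \times D$. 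Hence $B^c_D(z, r) = B^k_D(z, r)$ and the two Fridman invariants coincide. The main technical obstacle I anticipate is this Kobayashi-invariance step, which is a classical but non-trivial result about removal of codimension $\geq 2$ analytic subsets and must be quoted or briefly justified using the fact that one-dimensional holomorphic disks generically avoid subvarieties of complex codimension $\geq 2$.
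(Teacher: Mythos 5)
Your proposal is correct and follows essentially the same route as the paper: the lower bound as in Theorem \ref{KB}; the identity $c_D=c_\Omega$ and the extension of $f^{-1}$ across the singular set via Riemann's removable singularity theorem combined with the plurisubharmonicity of $\rho_\Omega$ (Lemma \ref{snorm}); and Lempert's theorem plus the Kobayashi-invariance of codimension-two removals for the final claim. The only (harmless) difference is the endgame of the contradiction: you propagate $f\circ g=\mathrm{id}$ by the identity theorem to get $P=f(g(P))\in f(\Omega)\subset D$, whereas the paper observes that the extension maps $\overline A$ into $\partial\Omega$ and lets the maximum principle force $\rho_\Omega\circ G\equiv 1$.
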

\begin{proof}
Let $z\in \Omega\backslash H$. As in the proof of Theorem \ref{KB}, one readily checks that $s_D^\Omega (z)\ge d_{c_{\Omega}}^{\overline{A}}(z)$. Thus we only need to show that $\tilde{e}^{\Omega}_D(z)\leq d_{c_{\Omega}}^{\overline{A}}(z)$.

Suppose that $\tilde{e}^{\Omega}_D(z)>d_{c_{\Omega}}^{\overline{A}}(z)$. Then, there exist $r>d_{c_{\Omega}}^{\overline{A}}(z)$ and a holomorphic embedding $f:\Omega \rightarrow D$ such that $f(0)=z$ and $B^{c}_{D}(z,r)\subset f(\Omega)$.

Let $P\in \overline{A}$ such that $d_{c_{\Omega}}^{\overline{A}}(z)=\tanh[c_{\Omega}(z,P)]$. By Riemann removable singularity theorem, we have $c_{D}(w_1,w_2)=c_{\Omega}(w_1,w_2)$, for all $w_1,w_2 \in D$. Therefore, there exists $\delta>0$ such that $B^{n}(P,\delta)\backslash \overline{A} \subset f(\Omega)$.

Now consider $g=f^{-1}:B^{n}(P,\delta)\backslash \overline{A} \rightarrow \Omega$. By Riemann removable singularity theorem, there exists a holomorphic mapping $G:B^{n}(P,\delta)\rightarrow \overline{\Omega}$ such that $\left.G\right|_{B^{n}(P,\delta)\backslash \overline{A}}=g$. Since $\Omega$ is balanced and convex, by Lemma \ref{snorm}, $\rho_{\Omega}(z)$ is plurisubharmonic. Hence $\rho_{\Omega}(G(z))$ is plurisubharmonic on $B^{n}(P,\delta)$. Since $f$ is biholomorphic from $\Omega$ to $f(\Omega)$, it is easy to see that $G(B^{n}(P,\delta)\cap \overline{A}) \subset \overline{\Omega}$. Therefore $\rho_{\Omega}(G(z))=1$, for all $z\in B^{n}(P,\delta)\cap \overline{A}$. By the maximum modulus principle for plurisubharmonic functions, we have $\rho_{\Omega}(G(z))\equiv 1$, which is a contradiction.

When $n\ge 2$ and $H$ is of codimension at least two, by \cite[Corollary 3.4.3]{Pflug2013} and \cite[Theorem 1]{Lempert}, we have
$$k_{D}(w_1,w_2)=k_{\Omega}(w_1,w_2)=c_\Omega(w_1,w_2)=c_{D}(w_1,w_2),\ \ \ w_1,w_2 \in D.$$
This implies that $e^{\Omega}_{D}(z)=\tilde{e}^{\Omega}_{D}(z)$.
 \end{proof}

\begin{remark}
In Theorem \ref{ati}, letting $\Omega=\mathbb{D}$ and $A=\{0\}$, we get
$$s_{\mathbb{D} \backslash \{0\}}(z)=\tilde{e}_{\mathbb{D} \backslash \{0\}}(z)=|z|.$$
However $s_{\mathbb{D} \backslash \{0\}}(z)\not\equiv e_{\mathbb{D} \backslash \{0\}}(z)$ (\cite[Theorem 6]{RY1}).
\end{remark}

The remainder of this section is concerned with the squeezing function and Fridman invariant defined by the unit ball $B^n$.

\begin{definition}\label{def}
Let $D$ be a domain in $\mathbb{C}^{n}$, $n\ge 2$. A boundary point $P\in \partial{D}$ is said to have the \textbf{R-S property}, if there exists an open neighbourhood $U=U_{1}\times U_{2}$ of $P$ such that $U\cap D=D_{1}\times D_{2}$, where $U_{j}$, $j=1,2$, is an open subset of $\mathbb{C}^{n_{j}}$, $n_{1}+n_{2}=n$, $n_j>0$, and $D_{j}$ is a domain in $\mathbb{C}^{n_{j}}$, and one of the following conditions holds:\\
\textup{(i)} $\overline{D_{2}}\cap U_{2}\neq U_{2}$.\\
\textup{(ii)} $\overline{D_{2}}\cap U_{2}= U_{2}$ and $U_{2}\backslash D_{2}$ contains a maximally totally-real submanifold of $V$, where $V$ is a subdomain of $U_{2}$.\\
\textup{(iii)} $D_{2}=U_{2}\backslash \{z\in U_{2}:\rho(z)=0\}$, where $\rho(z)$ is a non-degenerate $C^1$ smooth function of $U_{2}$. Besides, at least one of the non-empty open subsets $\{z\in U_{2}:\rho(z)<0\}$ and $\{z\in U_{2}:\rho(z)>0\}$ is connected.
\end{definition}

\begin{remark}\label{R:RS}
Condition (i) in Definition \ref{def} was required in the original Remmert-Stein theorem (\cite{Remmert}), while condition (ii) was considered in \cite{JJ}.
\end{remark}

Inspired by the Remmert-Stein Theorem, we have the following slightly more general result. For completeness, we include a proof here.

\begin{lemma}\label{Levi}
Let $D$ be a domain in $\mathbb{C}^{m}$, $m\ge 2$. If a point $P\in \partial{D}$ has the R-S property, then there is no proper holomorphic map from $D$ to $B^n$, $n\ge 2$.
\end{lemma}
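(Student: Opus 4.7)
Suppose for contradiction that $f:D\to B^n$ is a proper holomorphic map. The strategy has two independent components: a uniform maximum-modulus argument that yields the contradiction once a holomorphic extension of $f$ across $P$ is in hand, and three case-by-case extension arguments corresponding to the three conditions of Definition~\ref{def}.

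For the maximum-modulus component, suppose that in the relevant case we have produced an open set $W\subset U$ with $W\supsetneq U\cap D$, together with a holomorphic extension $\tilde f:W\to\overline{B^n}$ of $f|_{U\cap D}$, and that $W$ contains some point $P'\in\partial D$. Properness of $f$ together with continuity of $\tilde f$ force $|\tilde f(P')|=1$. The function $g(z):=\langle\tilde f(z),\tilde f(P')\rangle$ is then holomorphic on $W$, satisfies $|g(z)|\le\|\tilde f(z)\|\,\|\tilde f(P')\|\le 1$ by Cauchy--Schwarz, and attains the value $1$ at the interior point $P'$. The maximum modulus principle gives $g\equiv 1$ on the connected component of $P'$ in $W$, and the equality case of Cauchy--Schwarz forces $\tilde f\equiv\tilde f(P')$ there. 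Since this component meets $D$, analytic continuation makes $f$ constant on all of $D$, contradicting properness.

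For the extension component, I treat the three cases separately. In case~(i) the condition $\overline{D_2}\cap U_2\ne U_2$ is the classical Remmert--Stein setup: for each fixed $z_1\in U_1$ the bounded holomorphic map $z_2\mapsto f(z_1,z_2)$ on $D_2$ extends via a Hartogs-type argument (using Lemma~\ref{Hartogs}) to a strictly larger open set in $U_2$, and Osgood's theorem assembles these slicewise extensions into a joint holomorphic extension $\tilde f$ on a product neighborhood strictly larger than $U_1\times D_2$. In case~(ii), $U_2\setminus D_2$ contains a maximally totally-real submanifold $M$ of a subdomain $V\subset U_2$, and the classical removability of bounded holomorphic functions across such submanifolds (cf.~\cite{JJ}) extends $f$ across $M$, giving $\tilde f$ defined at points of $D_1\times M\subset\partial D$. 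In case~(iii), $D_2=U_2\setminus\{\rho=0\}$ with $\rho$ a non-degenerate $C^1$ defining function; the connectedness hypothesis ensures that $f$ restricted to a connected side of $\{\rho=0\}$ is a single-valued bounded holomorphic function, and a Rad\'o/reflection-type extension takes $f$ across the $C^1$ hypersurface, giving $\tilde f$ defined at points of the hypersurface in $\partial D$.

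The main obstacle is producing the extension itself, and case~(iii) is the most delicate: removability across a $C^1$ real hypersurface is nontrivial without extra regularity, and the connectedness hypothesis in Definition~\ref{def}(iii) is exactly what lets us view $f$ as single-valued on one side of $\{\rho=0\}$ and then pass across the hypersurface via a one-sided boundary-value extension. Once the extension $\tilde f$ is in hand in each case, the uniform maximum-modulus step concludes the proof.
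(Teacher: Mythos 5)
The central mechanism of your proof---extend $f$ holomorphically across a piece of $\partial D$ and then run the maximum modulus principle on $\langle\tilde f,\tilde f(P')\rangle$---is a sound endgame, but the extensions it requires do not exist in cases (i) and (iii), and this is a genuine gap rather than a technicality. In case (i) the hypothesis $\overline{D_{2}}\cap U_{2}\neq U_{2}$ carries no removability information at all: $D_{2}$ may simply be a subdomain of $U_{2}$ with smooth boundary (take $D=\mathbb{D}^{2}$ near the boundary point $(0,1)$, with $U_{2}$ a small ball around $1$), and bounded holomorphic functions on $D_{2}$ do not continue past $\partial D_{2}$; Lemma \ref{Hartogs} concerns deleting a \emph{compact} subset with connected complement and is inapplicable here. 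In case (iii) the set $\{\rho=0\}$ is a real hypersurface, i.e.\ of real codimension one, and a bounded holomorphic function on one side of a $C^{1}$ hypersurface admits no holomorphic continuation across it (consider a Riemann map of a half-ball onto a disk); neither Rad\'o's theorem, which is about continuous functions holomorphic off their zero set, nor Schwarz reflection, which needs real analyticity and constrained boundary values, produces the ``one-sided boundary-value extension'' you invoke. Only case (ii), where $U_{2}\setminus D_{2}$ contains a maximally totally real submanifold, is genuinely a removable-singularity situation, and even there one should note it has real codimension $\ge 2$ only when $n_{2}\ge 2$.

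The paper's proof avoids extension entirely and runs the classical Remmert--Stein degeneration argument (it writes out only case (iii), the other two being covered by Remmert--Stein and by \cite{JJ}). One takes a sequence $\omega_{\nu}$ in a connected piece $\tilde D_{2}$ of $D_{2}$ tending to a boundary point of $\tilde D_{2}$ inside $U_{2}$; by Montel the slice maps $\xi\mapsto f(\xi,\omega_{\nu})$ converge on $D_{1}$ to a map $\Phi$ whose image lies in $\partial B^{n}$ (properness pushes the limit to the boundary); since $\partial B^{n}$ contains no nontrivial analytic disks, $\Phi$ is constant, so by Weierstrass $\partial f_{j}/\partial\xi_{p}(\cdot,\omega_{\nu})\to 0$. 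Extending this derivative by $0$ on $U_{2}\setminus\tilde D_{2}$ and applying Rad\'o's theorem shows $\partial f_{j}/\partial\xi_{p}\equiv 0$ on all of $D$, so each fiber of $f$ contains a non-compact set $C_{\omega_{0}}\supset D_{1}\times\{\omega_{0}\}$, contradicting properness. To repair your argument you would need to replace cases (i) and (iii) by this slice argument; the maximum-modulus step can be retained for case (ii) if the removability across totally real submanifolds is properly sourced.
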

\begin{proof}
By Remark \ref{R:RS}, we only consider condition (iii) of the R-S property. Suppose that there is a proper holomorphic map $f=(f_{1}, \ldots, f_{m})$ of $D$ into $B^n$. Write $z$ as $(\xi,\omega)$, $\xi \in \mathbb{C}^{n_{1}}$, $\omega \in \mathbb{C}^{n_{2}}$.

Assume that $\tilde{D}_2=\{z\in U_{2}:\rho(z)>0\}$ is connected. Take a sequence $\omega_{\nu} \in \tilde{D}_2$ and suppose that $\omega_{\nu} \rightarrow  \omega \in \partial{\tilde{D}_2}\cap U_{2}$. For $j=1, \ldots, m$, the functions $\xi \mapsto f_{j}\left(\xi, \omega_{\nu}\right)$ define holomorphic functions $\phi_{j,\nu}$ on $D_{1}$, with $\ds \sum\left|\phi_{j, \nu}\right|^{2}$ bounded. By Montel's theorem, passing to a subsequence if necessary, we may assume that $\phi_{j,\nu} \rightarrow \phi_{j}$ uniformly on compact subsets of $D_{1}$.

Write $f\left(\xi, \omega_{\nu}\right)=\left(\phi_{1, \nu}(\xi), \ldots, \phi_{m, \nu}(\xi)\right)$ and $\Phi(\xi)=\left(\phi_{1}(\xi), \ldots, \phi_{m}(\xi)\right)$. For any $\xi \in D_{1}$, $\left\{\left(\xi, \omega_{\nu}\right)\right\}$ has no limit point in $D$, since $\left\{f\left(\xi, \omega_{\nu}\right)\right\}$ has no limit point in $B^n$. Hence, $\Phi(\xi)\in \partial B^n$. As $\partial B^n$ does not contain any germs of non-trivial complex-analytic curves, it does not contain non-trivial analytic disks. This implies that
$$\frac{\partial \phi_{j}}{\partial \xi_{p}}\equiv 0,\ \ \ p=1, \ldots, n_{1}.$$
Thus, by Weierstrass's theorem, we have
$$\frac{\partial f_{j}\left(\xi, \omega_{\nu}\right)}{\partial \xi_{p}} \longrightarrow \frac{\partial \phi_{j}}{\partial \xi_{p}}=0.$$
Therefore, for fixed $\xi \in D_{1}$, the function
$$\omega \longmapsto\left\{
\begin{array}{ll}
\ds \frac{\partial f_{j}(\xi,\omega)}{\partial \xi_{p}}, & \omega \in \tilde{D}_2\cap U_{2}, \\
0, & \omega \in U_{2}\backslash \tilde{D}_2,
\end{array}
\right.$$
is holomorphic on $U_{2}$ by Rado's theroem. On the other hand, $U_{2}\backslash \overline{\tilde{D}_2}$ is non-empty, so we have $\frac{\partial f_{j}(\xi, \omega)}{\partial \xi_{p}} \equiv 0$ on $D_{1} \times \tilde{D}_2$. As $D$ is connected, we conclude that $\frac{\partial f_{j}(\xi, \omega)}{\partial \xi_{p}} \equiv 0$ on $D$.

For $\omega_{0} \in \tilde{D}_2$, let $C_{\omega_{0}}$ be the connected component of $\{(z,w)\in D:\ w=\omega_{0}\}$ which contains $D_{1} \times\left\{\omega_{0}\right\}$. Then the map $f$ is constant on $C_{\omega_{0}}$. It is clear that  $C_{\omega_{0}}$ is not compact in $D$. Therefore $f^{-1}(f(\xi_{0},\omega_{0}))$ is not compact for $(\xi_{0},\omega_{0})\in D_{1} \times \tilde{D}_2$, which contradicts the fact that $f$ is a proper map.
\end{proof}

Now we apply Lemma \ref{Levi} to study the squeezing function and Fridman invariant of some special domains.

\begin{theorem}\label{Leviflat}
Let $D_{i}\subset \mathbb{C}^{n_{i}}$, $i=1,2$, be domains with dense $C^{1}$ boundary, i.e. for all $p \in \partial D_{i}$ and for all $\delta>0$ there exists $q \in B^{n_{i}}(p, \delta)$ such that $\partial D_{i}$ is $C^{1}$ smooth near $q$. Let $D=D_{1} \times D_{2}$. If $D\subset \subset B^{n}$ and $D':=B^{n} \backslash \overline{D}$ is connected, then we have
$$s_{D'}(z)=\tilde{e}_{D'}(z)=d_{c_{B^n}}^{\partial D}(z),\ \ \ z\in D'.$$
\end{theorem}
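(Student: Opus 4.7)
The plan is to reduce, via Theorem \ref{KB} and Lemma \ref{es}, to proving the single inequality $\tilde{e}_{D'}(z) \leq d_{c_{B^n}}^{\partial D}(z)$, and then to derive a contradiction from its failure by locating a boundary point of a would-be biholomorphic image of $B^n$ at which the R-S obstruction of Lemma \ref{Levi} applies. Concretely, Theorem \ref{KB} with $\Omega = B^n$ and $K = \overline{D}$ gives $s_{D'}(z) = d_{c_{B^n}}^{\partial D}(z)$ and Lemma \ref{es} gives $s_{D'}(z) \leq \tilde{e}_{D'}(z)$; I would suppose for contradiction that the reverse inequality fails and pick $r > d_{c_{B^n}}^{\partial D}(z)$ together with a holomorphic embedding $f \colon B^n \to D'$ with $f(0) = z$ and $B^c_{D'}(z,r) \subset f(B^n)$. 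Since $D \subset\subset B^n$ and $D'$ is connected, Lemma \ref{Hartogs} combined with the maximum modulus principle gives $c_{D'} = c_{B^n}$ on $D' \times D'$, and continuity of $c_{B^n}$ then produces a point $P \in \partial D$ realising $\tanh[c_{B^n}(z,P)] = d_{c_{B^n}}^{\partial D}(z)$ together with a $\delta > 0$ with $B^n(P,\delta) \cap D' \subset f(B^n)$.

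I would then exploit the product structure together with the density hypothesis. Writing $P = (P_1,P_2)$ and (swapping the two factors if necessary) assuming $P_1 \in \partial D_1$, the density of $C^1$ smooth boundary in $\partial D_1$ and the fact that $D_2$ accumulates at $P_2$ allow me to pick a $C^1$ smooth boundary point $Q_1 \in \partial D_1$ and an interior point $Q_2 \in D_2$ with $Q := (Q_1,Q_2) \in \partial D \cap B^n(P,\delta)$. On a sufficiently small product neighbourhood $U = U_1 \times U_2$ of $Q$ one has $U_1 \cap D_1 = \{w_1 \in U_1 : \rho_1(w_1) < 0\}$ for some $C^1$ defining function $\rho_1$ with non-vanishing differential, and $U_2 \subset D_2$. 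Since $\overline{D} = \overline{D_1} \times \overline{D_2}$, a point $(w_1,w_2) \in U$ lies in $D'$ iff $w_1 \notin \overline{D_1}$, so $U \cap D' = \{w_1 \in U_1 : \rho_1(w_1) > 0\} \times U_2$. Combining with $U \cap D' \subset f(B^n) \subset D'$ forces $U \cap f(B^n) = \{w_1 \in U_1 : \rho_1(w_1) > 0\} \times U_2$, which, after interchanging the two factors, is exactly condition (i) of Definition \ref{def} at $Q \in \partial(f(B^n))$; thus $Q$ has the R-S property.

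Lemma \ref{Levi} then denies the existence of any proper holomorphic map $f(B^n) \to B^n$, whereas $f^{-1} \colon f(B^n) \to B^n$ is a biholomorphism and in particular proper, giving the desired contradiction. The main obstacle is the middle step: one must ensure that a suitable $Q$ with smooth boundary behaviour in one factor and interior behaviour in the other can be located inside $B^n(P,\delta)$ even when $P$ sits on the corner $\partial D_1 \times \partial D_2$, and that $U$ can be shrunk enough so that $U \cap D' \subset f(B^n)$ holds while retaining the required product structure. The symmetric case with $P_1 \in \overline{D_1}$ and $P_2 \in \partial D_2$ is handled by interchanging the roles of $D_1$ and $D_2$ throughout.
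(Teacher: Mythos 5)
Your proposal is correct and follows essentially the same route as the paper: reduce to the single inequality $\tilde{e}_{D'}(z)\le d_{c_{B^n}}^{\partial D}(z)$ via Theorem \ref{KB} and Lemma \ref{es}, then near the minimizing boundary point locate a point of $\partial D$ whose coordinate in one factor is interior and in the other factor is a $C^{1}$ smooth boundary point, so that $f(B^n)$ acquires R-S property (i) there and Lemma \ref{Levi} contradicts the properness of $f^{-1}$. The only (minor) differences are that the paper first dispatches the degenerate case $n_{1}+n_{2}<n$ via Theorem \ref{ati}, which you implicitly assume away, and that you place the smooth boundary behaviour in the first factor rather than the second, which is just a relabeling.
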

\begin{proof}
If $n_{1}+n_{2}<n$ then, by Theorem \ref{ati}, we have the conclusion. Thus, we will assume that $n_{1}+n_{2}=n$.

Let $z \in D'$. Similar as in the proof of Theorem \ref{KB}, it is easy to see that $s_{D'}(z)\ge d_{c_{B^n}}^{\partial D}(z)$. Thus it suffices to show that $\tilde{e}_{B^{n}\backslash \overline{D}}(z) \leq d_{c_{B^n}}^{\partial D}(z)$.

Let $Q \in \partial D$ such that $d_{c_{B^n}}^{\partial D}(z)=\tanh \left[c_{B^n}(z, Q)\right]$. Assume that $\tilde{e}_{D'}(z)>d_{c_{B^n}}^{\partial D}(z)$. Then, there exist $\delta>0$ and a holomorphic embedding $g: B^n \rightarrow D'$ with $g(0)=z$ such that $B^{n}(Q,\delta)\backslash \overline{D} \subset g(B^n)$.

Since $D_{1}$ and $D_{2}$ have dense $C^1$ boundaries, without loss of generality, we can find $P=(p_{1},p_{2})\in B^{n}(Q,\delta)\cap \partial D \cap \partial{g(B^n)}$, such that $p_{1}\in D_{1}$ and $\partial D_{2}$ is $C^1$ smooth near $p_{2}\in \partial D_{2}$.

Let $\rho(z)$ be a local defining function on some neighborhood $U_2$ of $p_{2}$ in $\mathbb{C}^{n_{2}}$ such that $D_{2}\cap U_2 =\{z\in U:\rho(z)<0\}$. Then there exists a connected neighborhood $V_2\subset U_2$ of $p_{2}$ and a connected neighborhood $V_{1}\subset D_{1}$ of $p_{1}$ such that (1) $V_2\backslash \overline{D}_{2}$ is connected; (2) $V_{1}\times V_2\subset B^{n}(Q,\delta)$. Let $G_{2}=V_2\backslash \overline{D_{2}}$ and $W_{2}=\{z\in V_2:\rho(z)>-\delta_{1}\}$, $\delta_{1}>0$ sufficiently small. It is easy to see that $(V_{1}\times W_{2})\cap g(B^n)=V_{1}\times G_{2}$, $G_{2}\subset W_{2}$, and $W_{2}\backslash \overline{G_{2}}$ is non-empty. Hence $P=(p_{1},p_{2})\in \partial g(B^n)$ has R-S property (i).

By Lemma \ref{Levi}, there is no proper mapping from $g(B^n)$ to $B^n$, which contradicts the fact that $g(z)$ is an injective mapping from $B^n$ to $g(B^n)$.
\end{proof}

\begin{theorem}\label{T:ii}
Let $D_{1}$ be a domain in $\mathbb{C}^{n_{1}}$ and $D_{2}=\{z\in \mathbb{C}^{n_{2}}:\ \Re(z_{j})=0,\ 1\leq j\leq n_{2}\}\cap B^{n_{2}}(0,\frac{1}{2})$, with $n_{1}\ge 1$ and $n_{2}\ge 2$. Denote $K=\overline{D_{1}\times D_2}$. If $K\subset\subset B^n$ with $n=n_{1}+n_{2}$ and $D:=B^n\backslash K$ is connected, then we have
$$s_D(z)=\tilde{e}_D(z)=d_{c_{B^n}}^K(z),\ \ \ z\in D.$$
\end{theorem}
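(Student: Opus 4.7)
The strategy is to mirror the proofs of Theorems~\ref{Leviflat} and~\ref{bdary}. The lower bound $s_D(z) \ge d_{c_{B^n}}^K(z)$ follows exactly as in the first half of the proof of Theorem~\ref{KB}, by picking an automorphism of the homogeneous ball $B^n$ sending $z$ to the origin and pulling back the Minkowski ball of radius $d_{c_{B^n}}^K(z)$. Combining this with Lemma~\ref{es} reduces the whole theorem to the single inequality $\tilde{e}_D(z) \le d_{c_{B^n}}^K(z)$.

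For this remaining inequality I would argue by contradiction. Assume $\tilde{e}_D(z) > d_{c_{B^n}}^K(z)$, obtaining some $r > d_{c_{B^n}}^K(z)$ and a holomorphic embedding $g\colon B^n \to D$ with $g(0)=z$ and $B^c_D(z,r) \subset g(B^n)$. Because $K \subset\subset B^n$, $D = B^n\backslash K$ is connected, and $n = n_1+n_2 \ge 3$, Lemma~\ref{Hartogs} together with the maximum modulus principle lifts every map in $O(D,\mathbb{D})$ to one in $O(B^n,\mathbb{D})$, so $c_D = c_{B^n}$ on $D$. Choosing $P \in K$ with $d_{c_{B^n}}^K(z) = \tanh[c_{B^n}(z,P)]$ and invoking the continuity of $c_{B^n}$, I then find $\delta>0$ with $B^n(P,\delta) \subset B^c_{B^n}(z,r)$, which upgrades to $B^n(P,\delta)\backslash K \subset g(B^n)$.

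The crucial step is to find, inside $B^n(P,\delta)\cap K$, a point at which the R-S property~(ii) of Definition~\ref{def} applies to $g(B^n)$. Writing $P = (p_1,p_2) \in \overline{D_1}\times \overline{D_2}$, and observing that $D_1$ is open in $\mathbb{C}^{n_1}$ while $D_2$ is dense in $\overline{D_2}$ inside the totally real subspace $\{\Re z=0\}$, I can perturb to some $P' = (p_1',p_2') \in B^n(P,\delta)$ with $p_1' \in D_1$ and $p_2' \in D_2$. On a sufficiently small product neighborhood $U = U_1 \times U_2$ of $P'$ with $U_1 \subset D_1$ and $U \subset B^n(P,\delta)$, one has $K\cap U = U_1 \times (\overline{D_2} \cap U_2)$, hence $g(B^n)\cap U = U_1 \times (U_2\backslash \overline{D_2})$; since $U_2\cap \overline{D_2}$ contains the open piece $U_2\cap D_2$ of the maximally totally real submanifold $\{\Re z=0\}\cap B^{n_2}(0,\tfrac{1}{2})$, Definition~\ref{def}(ii) is satisfied at $P'$. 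Lemma~\ref{Levi} then rules out any proper holomorphic map $g(B^n)\to B^n$, contradicting the biholomorphism $g^{-1}\colon g(B^n) \to B^n$ and finishing the proof.

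I expect the main technical obstacle to be the bookkeeping at the crucial step: one must simultaneously exploit the Hartogs-based identity $c_D=c_{B^n}$ to upgrade the $c$-inclusion $B^c_D(z,r) \subset g(B^n)$ into the Euclidean inclusion $B^n(P,\delta)\backslash K \subset g(B^n)$, and then choose $P'$ and $U$ fine enough that the local decomposition of $g(B^n)\cap U$ is genuinely a product rather than being merely contained in one. The totally real nature of $D_2$ makes the latter automatic once $p_1'$ is drawn from the open part $D_1$ and $U_1$ is shrunk so that $U_1\subset D_1$.
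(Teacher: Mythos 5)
Your proposal is correct and follows essentially the same route as the paper's proof: reduce to the upper bound via Theorem~\ref{KB} and Lemma~\ref{es}, locate a point of $K$ with first coordinate in $D_{1}$ and second in $D_{2}$, exhibit the local product decomposition $g(B^n)\cap(U_{1}\times U_{2})=U_{1}\times\bigl(U_{2}\backslash\overline{D_{2}}\bigr)$ realizing R-S property (ii), and conclude with Lemma~\ref{Levi}. You merely spell out two steps the paper leaves implicit (the Hartogs-based identity $c_{D}=c_{B^n}$ used to pass from the $c$-ball inclusion to the Euclidean one, and the perturbation producing $P'$ with $p_1'\in D_1$, $p_2'\in D_2$), which is fine.
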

\begin{proof}
Let $z\in D$. By Theorem \ref{KB} and Lemma \ref{es}, it suffices to show that $\tilde{e}_D(z)\le d_{c_{B^n}}^K(z)$.

Let $Q\in K$ such that $d_{c_{B^n}}^K(z)=\tanh[c_{B^n}(z,Q)]$. Assume that $\tilde{e}_D(z)>d_{c_{B^n}}^K(z)$. Then, there exist $\delta>0$ and a holomorphic embedding $g: B^n \rightarrow K$ with $g(0)=z$ such that $B^n(Q,\delta)\backslash K \subset g(B^n)$. We can find $P=(p_{1},p_{2})\in B^n(Q,\delta)\cap \partial g(B^n)$ with $p_{1}\in D_{1}$ and $p_{2}\in D_{2}$. Thus there exist $\delta_{j}>0$, $j=1,2$, such that $B^{n_{1}}(p_{1},\delta_{1})\times B^{n_{2}}(p_{2},\delta_{2}) \subset B^n(Q,\delta)$.

Denote $U_{1}=B^{n_{1}}(p_{1},\delta_{1})$ and $U_{2}=B^{n_{2}}(p_{2},\delta_{2})\backslash \{z\in \mathbb{C}^{n_{2}}:\ \Re(z_{j})=0,\ 1\leq j\leq n_{2}\}$. Then we have
$$(B^{n_{1}}(p_{1},\delta_{1})\times B^{n_{2}}(p_{2},\delta_{2}))\cap g(B^n)=U_{1}\times U_{2}.$$

Since $n_{2}\ge 2$, $U_{2}$ is connected and $\overline{U_{2}}=B^{n_{2}}(p_{2},\delta_{2})$. Hence $P=(p_{1},p_{2})\in \partial{g(B^n)}$ has R-S property (ii).

By Lemma \ref{Levi}, there is no proper mapping from $g(B^n)$ to $B^n$, which contradicts the fact that $g(z)$ is an injective mapping from $B^n$ to $g(B^n)$.
\end{proof}

\begin{theorem}\label{rs}
Let $D_{1}$ be a domain in $\mathbb{C}^{n_{1}}$ and $D_{2}=\{z\in \mathbb{C}^{n_{2}}:\ \Re(z_{n_{2}})=0\}\cap B^{n_{2}}(0,\frac{1}{2})$. Denote $K=\overline{D_{1}\times D_2}$. If $K\subset\subset B^n$ with $n=n_{1}+n_{2}$ and $D:=B^n\backslash K$ is connected, then we have
$$s_D(z)=\tilde{e}_D(z)=d_{c_{B^n}}^K(z),\ \ \ z\in D.$$
\end{theorem}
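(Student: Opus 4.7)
The plan is to follow the argument of Theorem \ref{T:ii} with one essential modification: since $D_2$ is now a real hypersurface in $\mathbb{C}^{n_2}$ rather than a totally real set, the appropriate version of the R-S property is (iii) instead of (ii). First, by Theorem \ref{KB} we have $s_D(z)=d_{c_{B^n}}^K(z)$, and by Lemma \ref{es} we have $s_D(z)\le\tilde{e}_D(z)$, so it suffices to prove the upper bound $\tilde{e}_D(z)\le d_{c_{B^n}}^K(z)$.

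Arguing by contradiction, suppose $\tilde{e}_D(z)>d_{c_{B^n}}^K(z)$ and pick $Q\in K$ with $d_{c_{B^n}}^K(z)=\tanh[c_{B^n}(z,Q)]$. Then there exist $\delta>0$ and a holomorphic embedding $g:B^n\to D$ with $g(0)=z$ such that $B^n(Q,\delta)\setminus K\subset g(B^n)$. By shrinking $\delta$ if necessary and using that $K=\overline{D_1\times D_2}$, select a boundary point $P=(p_1,p_2)\in B^n(Q,\delta)\cap\partial g(B^n)$ with $p_1\in D_1$ and $p_2\in D_2$. Choose $\delta_1,\delta_2>0$ so small that $U_1\times V\subset B^n(Q,\delta)$, where $U_1=B^{n_1}(p_1,\delta_1)$ and $V=B^{n_2}(p_2,\delta_2)$, and set $U_2=V\setminus\{w\in\mathbb{C}^{n_2}:\Re(w_{n_2})=0\}$. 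Then one verifies, exactly as in Theorem \ref{T:ii}, that
$$(U_1\times V)\cap g(B^n)=U_1\times U_2.$$

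The key new point is the verification that $P$ has the R-S property in the sense of Definition \ref{def}(iii). Take $\rho(w)=\Re(w_{n_2})$: this is a non-degenerate $C^1$ smooth (indeed pluriharmonic) function on $V$, and $U_2=V\setminus\{\rho=0\}$. The two sets $\{w\in V:\rho(w)>0\}$ and $\{w\in V:\rho(w)<0\}$ are open half-balls, hence each is connected, so condition (iii) of Definition \ref{def} is satisfied at $P$. Note that (ii) does not apply here because $U_2$ itself is disconnected, which is precisely why we need the Rado-type argument built into the proof of Lemma \ref{Levi}.

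Applying Lemma \ref{Levi} to the domain $g(B^n)$, we conclude that there is no proper holomorphic map from $g(B^n)$ into $B^n$. But $g^{-1}:g(B^n)\to B^n$ is a biholomorphism, hence proper, giving the desired contradiction. The main obstacle is really bookkeeping: one has to confirm that picking $p_1\in D_1$ and $p_2\in D_2$ in the interior of each factor is possible (which uses $P\in\partial g(B^n)$ and the product structure of $K$), and then correctly match the local product description $U_1\times U_2$ to clause (iii) rather than clause (ii) of the R-S property. Once this identification is made, the heavy lifting is already done by Lemma \ref{Levi}.
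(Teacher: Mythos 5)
Your proposal is correct and follows essentially the same route as the paper, which itself proves Theorem \ref{rs} by repeating the argument of Theorem \ref{T:ii} with $U_2=B^{n_2}(p_2,\delta_2)\setminus\{\Re(w_{n_2})=0\}$ and invoking clause (iii) of the R-S property via the defining function $\rho(w)=\Re(w_{n_2})$ before applying Lemma \ref{Levi}. Your additional checks (connectedness of the two half-balls, why clause (ii) fails, properness of $g^{-1}$) are exactly the details the paper leaves implicit.
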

\begin{proof}
The proof is almost the same as that of Theorem \ref{T:ii}, except that we take $U_2=B^{n_{2}}(p_{2},\delta_{2})\backslash \{z\in \mathbb{C}^{n_{2}}:\ \Re(z_{n_{2}})=0\}$ and get that
$$(B^{n_{1}}(p_{1},\delta_{1})\times B^{n_{2}}(p_{2},\delta_{2}))\cap g(B^n)=U_{1}\times U_{2}.$$
Hence $P=(p_{1},p_{2})\in \partial{g(B^n)}$ has R-S property (iii), and we get the same contradiction.
\end{proof}

\section{Examples and further results}\label{S:examples}

Based on our study in section \ref{S:invariants}, we compute the precise form of the (generalized) squeezing functions and Fridman invariants of many example domains in this section.

Denote by $B^n$ the unit ball and $\dv^n$ the unit polydisk in $\cv^n$, $n\ge 2$. First, recall that (\cite[Corollary 2.3.5]{Pflug2013})
\begin{equation}\label{E:cB}
\tanh c_{B^{n}}(a,z)=\left[1-\frac{\left(1-\|a\|^{2}\right)\left(1-\|z\|^{2}\right)}{|1-\langle z, a\rangle|^{2}}\right]^{\frac{1}{2}}.
\end{equation}
And by the product property of the Carath\'{e}odory pseudodistance (\cite[Theorem 18.2.1]{Pflug2013}), we have
\begin{equation}\label{E:cD}
\tanh c_{\mathbb{D}^n}(z,a)=\max_{1\leq i \leq n}\left|\frac{z_{i}-a_{i}}{1-\overline{a_{i}}z_{i}}\right|.
\end{equation}

By Theorem \ref{KB}, we have the following

\begin{corollary}
Let $0<r<1$ and $\mathbb{D}^{n}_{r}=\left\{z \in \mathbb{C}^{n}:\ \left|z_{i}\right|<r,\ 1\leq i\leq n \right\}$, $n\ge 2$. Then
$$s^{\mathbb{D}^n}_{\mathbb{D}^{n}\backslash \overline{\mathbb{D}^{n}_{r}}}(z)=\max_{1\leq i\leq n} \frac{\max\left\{\left|z_{i}\right|,r\right\}-r}{1-r\left|z_{i}\right|},\ \ \ z\in \mathbb{D}^{n}\backslash \overline{\mathbb{D}^{n}_{r}}.$$
\end{corollary}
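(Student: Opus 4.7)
The plan is to apply Theorem \ref{KB} with $\Omega = \mathbb{D}^n$, which is bounded, balanced, convex and homogeneous, and $K = \overline{\mathbb{D}^n_r}$, whose complement in $\mathbb{D}^n$ is connected for $n \ge 2$. This reduces the computation of the generalized squeezing function to evaluating
\[
d_{c_{\mathbb{D}^n}}^{\overline{\mathbb{D}^n_r}}(z) \;=\; \min_{w \in \overline{\mathbb{D}^n_r}} \tanh c_{\mathbb{D}^n}(z,w).
\]
Substituting the product formula \eqref{E:cD}, the objective becomes $\max_{1 \le i \le n}\left|\tfrac{z_i - w_i}{1 - \overline{w_i} z_i}\right|$, to be minimized over the product constraint $|w_i| \le r$ for every $i$.

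Since both the objective (a maximum over coordinates) and the feasible set (a product of disks of radius $r$) decouple coordinate-wise, the joint minimum coincides with the maximum over $i$ of the one-dimensional minima: choosing each $w_i$ independently to minimize the $i$-th factor realizes the joint minimum. The problem thus reduces to evaluating $m_i(z_i) := \min_{|w_i| \le r}\left|\tfrac{z_i - w_i}{1 - \overline{w_i} z_i}\right|$. If $|z_i| \le r$, take $w_i = z_i$ to get $m_i = 0$. If $|z_i| > r$, rotate so that $z_i$ is real and positive; the conjugation symmetry $w_i \mapsto \overline{w_i}$ then forces the optimum onto $[-r, r]$, and a short derivative calculation shows that $t \mapsto (|z_i| - t)/(1 - t|z_i|)$ has derivative $(|z_i|^2 - 1)/(1 - t|z_i|)^2 < 0$ on this segment, so the minimum is attained at $t = r$ and equals $(|z_i| - r)/(1 - r|z_i|)$. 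Both cases merge into the uniform formula
\[
m_i(z_i) \;=\; \frac{\max\{|z_i|, r\} - r}{1 - r|z_i|},
\]
and taking the maximum over $i$ delivers the claimed expression.

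No step should pose a real obstacle; the only subtlety worth flagging is the interchange of $\min_w$ and $\max_i$, which is valid precisely because the constraint $w \in \overline{\mathbb{D}^n_r}$ splits into independent conditions on each $w_i$. The one-dimensional minimization is a standard Schwarz--Pick computation.
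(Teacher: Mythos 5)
Your proposal is correct and follows exactly the paper's route: invoke Theorem \ref{KB} with $\Omega=\mathbb{D}^n$ and $K=\overline{\mathbb{D}^n_r}$, then evaluate $d_{c_{\mathbb{D}^n}}^{K}(z)$ via the product formula \eqref{E:cD}. The paper merely asserts this evaluation is ``easy to see,'' whereas you supply the details (the coordinatewise $\min$--$\max$ interchange over the product constraint and the one-variable Schwarz--Pick minimization), all of which check out.
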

\begin{proof}
From \eqref{E:cD}, it is easy to see that for $z\in \mathbb{D}^{n}\backslash \overline{\mathbb{D}^{n}_{r}}$,
$$d_{c_{\mathbb{D}^n}}^{\partial \mathbb{D}^{n}_{r}}(z)=\max_{1\leq i\leq n} \frac{\max\left\{\left|z_{i}\right|,r\right\}-r}{1-r\left|z_{i}\right|}.$$
\end{proof}

By Theorem \ref{man}, we have the following

\begin{corollary}\label{C:annuli}
$\textup{(i)}$ Let $A^{n}_{r}=\{z \in \mathbb{C}^{n}:\ 0<r<\|z\|<1\}$, $n\ge 2$. Then
$$s_{A^{n}_{r}}(z)=\tilde{e}_{A^{n}_{r}}(z)=\frac{\left\|z \right\|-r}{1-r\left\|z \right\|},\ \ \ z\in B^n\backslash \overline{A_r^n}.$$
$\textup{(ii)}$ Let $B^{n}_{r}=\left\{z\in \mathbb{C}^n:\ \|z\|<r<1\right\}$, $n\ge 2$. Then
$$s^{\mathbb{D}^n}_{\mathbb{D}^{n}\backslash \overline{B^{n}_{r}}}(z)=\tilde{e}^{\mathbb{D}^n}_{\mathbb{D}^{n}\backslash \overline {B^{n}_{r}}}(z)=\min_{w\in \partial B^{n}_{r}}\max_{1\leq i\leq n}\left|\frac{\left|z_{i}\right|-\left|w_{i}\right|}{1-\left|w_{i}\right|\left|z_{i}\right|}\right|,\ \ \ z\in \mathbb{D}^{n}\backslash \overline{B^{n}_{r}}.$$
\end{corollary}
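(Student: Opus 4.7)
The plan for both parts is to recognize them as direct applications of Theorem \ref{man}, with the excised set $G$ being a round sub-ball $B^n_r = \{\|z\| < r\}$. This $G$ is strictly pseudoconvex with smooth boundary, so every boundary point is pseudoconvex but not Levi flat, and the density hypothesis of Theorem \ref{man} holds trivially. The host domain $\Omega$ is $B^n$ in part (i) and $\mathbb D^n$ in part (ii); both are bounded, balanced, convex and homogeneous, and $\Omega\setminus \overline{B^n_r}$ is connected for $n\ge 2$. Thus Theorem \ref{man} reduces each statement to computing $d^{\partial B^n_r}_{c_{\Omega}}(z)=\min_{\|w\|=r}\tanh c_{\Omega}(z,w)$ for the corresponding $\Omega$.

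For part (i) I would plug formula \eqref{E:cB} into $\tanh c_{B^n}(z,w)$: since $\|w\|=r$ is fixed, the expression is minimized precisely when $|1-\langle w,z\rangle|$ is minimized. By Cauchy--Schwarz the minimum value $1-r\|z\|$ is attained at $w=rz/\|z\|$. A short algebraic check that $(1-r\|z\|)^2-(1-\|z\|^2)(1-r^2)=(\|z\|-r)^2$ then yields
\[
d^{\partial B^n_r}_{c_{B^n}}(z)=\frac{\|z\|-r}{1-r\|z\|},
\]
which (combined with Theorem \ref{man}) is exactly the claim.

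For part (ii) I would use the product formula \eqref{E:cD} for $c_{\mathbb D^n}$ and handle the phases and moduli of $w$ separately. For each index $i$, fixing $|w_i|=s_i$ and minimizing $|(z_i-w_i)/(1-\overline{w_i}z_i)|$ over $\arg w_i$ is a one-variable Möbius computation: the squared modulus equals $(|z_i|^2+s_i^2-2s_i|z_i|\cos\alpha)/(1+s_i^2|z_i|^2-2s_i|z_i|\cos\alpha)$, with $\alpha=\arg z_i-\arg w_i$, whose derivative in $\cos\alpha$ has constant sign; the minimum over $\alpha$ is therefore $(||z_i|-s_i|)/(1-s_i|z_i|)$, attained at $\arg w_i=\arg z_i$. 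Since the phases $\arg w_i$ can be chosen freely of the modulus constraint $\sum s_i^2=r^2$, aligning every $w_i$ with the corresponding $z_i$ simultaneously achieves the joint minimum of the max, which gives the displayed min--max formula.

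The substantive work is entirely done by Theorem \ref{man}; no genuine obstacle arises. The only point requiring mild care is ensuring that the pointwise phase optimization in part (ii) is genuinely compatible with a single global choice of $w$, but this is immediate because the phase and modulus constraints decouple. The algebraic simplification in part (i) is the sort of identity worth writing out once so the final expression appears in clean form.
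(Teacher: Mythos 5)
Your proposal is correct and follows exactly the paper's route: the paper also derives both parts from Theorem \ref{man} applied with $G=B^n_r$ (whose boundary is strongly pseudoconvex, so the density hypothesis holds), and then evaluates $d_{c_\Omega}^{\partial B^n_r}$ via formulas \eqref{E:cB} and \eqref{E:cD}. The only difference is that the paper dismisses the two minimizations as ``easy to see,'' whereas you carry out the Cauchy--Schwarz argument and the phase/modulus decoupling explicitly; both computations check out.
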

\begin{proof}
(i) From \eqref{E:cB}, it is easy to see that for $z\in B^n\backslash \overline{A_r^n}$,
$$d_{c_{B^n}}^{\partial A_r^n}(z)=\frac{\left\|z \right\|-r}{1-r\left\|z \right\|}.$$
(ii) From \eqref{E:cD}, it is easy to see that for $z\in \mathbb{D}^{n}\backslash \overline{B^{n}_{r}}$,
$$d_{c_{\mathbb{D}^n}}^{\partial B_r^n}(z)=\min_{w\in \partial B^{n}_{r}}\max_{1\leq i\leq n}\left|\frac{\left|z_{i}\right|-\left|w_{i}\right|}{1-\left|w_{i}\right|\left|z_{i}\right|}\right|.$$
\end{proof}

\begin{remark}
From Corollary \ref{C:annuli} (i), we see that the squeezing function of the $n$-dimensional annuli $A^{n}_{r}$, $n\ge 2$, is very different from that of the $1$-dimensional annuli $A_{r}$. While $s_{A_{r}^n}(z)\rightarrow 1$ as $\|z\|\rightarrow 1$, we actually have $s_{A^{n}_{r}}(z)\rightarrow 0$ as $\|z\|\rightarrow r$. On the other hand, it is clear that $s^{\mathbb{D}^n}_{\mathbb{D}^{n}\backslash \overline {B^{n}_{r}}}(z) \rightarrow 1$, as $z\rightarrow \partial \mathbb{D}^n$.
\end{remark}

By Theorem \ref{Leviflat}, we have the following

\begin{corollary}
For $0<r<\frac{1}{\sqrt{n}}$, we have
$$s_{B^{n}\backslash \overline{\mathbb{D}^{n}_{r}}}(z)=\tilde{e}_{B^{n}\backslash \overline{\mathbb{D}^{n}_{r}}}(z)=\min_{w\in \partial \mathbb{D}^{n}_{r}}\left[1-\frac{\left(1-\|w\|^{2}\right)\left(1-\|z\|^{2}\right)}{(1-\sum ^{n}_{i=1}|z_{i}w_{i}|\:)^{2}}\right]^{\frac{1}{2}},\ \ \ z\in B^{n}\backslash \overline{\mathbb{D}^{n}_{r}}.$$
\end{corollary}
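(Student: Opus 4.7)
The plan is to apply Theorem \ref{Leviflat} to the product decomposition $\mathbb{D}^n_r=\mathbb{D}_r\times\mathbb{D}^{n-1}_r$ (so $n_1=1$ and $n_2=n-1$). The disk $\mathbb{D}_r$ has smooth boundary, and the polydisk $\mathbb{D}^{n-1}_r$ has $C^1$ boundary away from the thin set on which two or more of the $|w_i|$ simultaneously equal $r$, so both factors satisfy the dense $C^1$ boundary hypothesis. The assumption $r<1/\sqrt{n}$ forces every corner of $\overline{\mathbb{D}^n_r}$ to have Euclidean norm $r\sqrt{n}<1$, whence $\overline{\mathbb{D}^n_r}\subset\subset B^n$; and since $\overline{\mathbb{D}^n_r}$ is a compact convex proper subset of the open ball with $n\ge 2$, the complement $B^n\setminus\overline{\mathbb{D}^n_r}$ is connected. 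Theorem \ref{Leviflat} then yields
$$s_{B^n\setminus\overline{\mathbb{D}^n_r}}(z)=\tilde{e}_{B^n\setminus\overline{\mathbb{D}^n_r}}(z)=\min_{w\in\partial\mathbb{D}^n_r}\tanh c_{B^n}(z,w),$$
and the task reduces to evaluating this minimum via the explicit formula \eqref{E:cB}.

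The second step is to show that the minimum over $w\in\partial\mathbb{D}^n_r$ may be computed with the substitution $|1-\langle w,z\rangle|^2\mapsto(1-\sum_{i=1}^n|z_iw_i|)^2$. Given any $w=(w_i)\in\partial\mathbb{D}^n_r$, define the phase-aligned point $w^*$ by $w^*_i=|w_i|e^{i\arg z_i}$ (and $w^*_i=|w_i|$ if $z_i=0$). Then $|w^*_i|=|w_i|$, so $w^*\in\partial\mathbb{D}^n_r$ and $\|w^*\|=\|w\|$, while $\langle w^*,z\rangle=\sum_i|w_i||z_i|\in[0,1)$, giving $|1-\langle w^*,z\rangle|^2=(1-\sum_i|z_iw_i|)^2$. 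The triangle inequality yields $|1-\langle w,z\rangle|\ge 1-|\langle w,z\rangle|\ge 1-\sum_i|z_iw_i|$, so replacing $w$ by $w^*$ never increases the expression inside the $\tanh c_{B^n}$ formula; hence the minimum is attained among aligned points, producing exactly the stated formula.

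Both steps are essentially bookkeeping: the first verifies the hypotheses of Theorem \ref{Leviflat} in this explicit setting, and the second is an elementary minimization. I do not foresee any substantive obstacle.
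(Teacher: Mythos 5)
Your proof is correct and follows exactly the route the paper intends: the paper derives this corollary directly from Theorem \ref{Leviflat} together with formula \eqref{E:cB}, giving no further details. Your verification of the hypotheses for the product $\mathbb{D}_r\times\mathbb{D}^{n-1}_r$ and the phase-alignment argument reducing $|1-\langle z,w\rangle|^2$ to $(1-\sum_i|z_iw_i|)^2$ simply make explicit the computation the paper leaves to the reader.
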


In the remainder of this section, we give several results on the (generalized) squeezing function and Fridman invariant of the punctured polydisk or the punctured ball.

\begin{theorem}\label{BP}
For any $z\in \mathbb{D}^{n}\backslash \{0\}$, $n\ge 2$, we have
$$\min\left\{\frac{\|z\|}{\sqrt{n}},\frac{1}{\sqrt{n}}\right\}\leq s_{\mathbb{D}^{n}\backslash \{0\}}(z)\leq \tilde{e}_{\mathbb{D}^{n}\backslash \{0\}}(z)\leq \min\left\{\max_{1\leq i\leq n}{\left|z_{i}\right|},\frac{1}{\sqrt{n}}\right\}.$$
In particular, $s_{\mathbb{D}^{n}\backslash \{0\}}(z)=\tilde{e}_{\mathbb{D}^{n}\backslash \{0\}}(z)=\frac{1}{\sqrt{n}}$, if $\|z\|\ge 1$.
\end{theorem}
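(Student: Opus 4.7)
To prove the lower bound I will exhibit the explicit embedding $f(w) := \phi(w)/\sqrt{n}$, where $\phi = (\phi_1,\dots,\phi_n) \in \aut(\mathbb{D}^n)$ is the product of Möbius maps $\phi_i(w_i) = (w_i - z_i)/(1 - \overline{z_i} w_i)$ sending $z$ to $0$. Since $\mathbb{D}^n/\sqrt{n} \subset B^n$, this defines an injective holomorphic map $\mathbb{D}^n \setminus \{0\} \to B^n$ with $f(z) = 0$ whose image is $(1/\sqrt{n})\mathbb{D}^n$ minus the single point $-z/\sqrt{n}$. A direct check shows $rB^n$ fits inside this image exactly when $r \le 1/\sqrt{n}$ (to sit in the scaled polydisk) and $r \le \|z\|/\sqrt{n}$ (to miss the puncture); taking $r = \min\{1/\sqrt{n}, \|z\|/\sqrt{n}\}$ yields the lower bound on $s$, and the middle inequality is Lemma \ref{es}.

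The upper bound on $\tilde{e}_{\mathbb{D}^n \setminus \{0\}}(z)$ splits into two pieces. For $\tilde{e}(z) \le \max_i|z_i|$, the argument is exactly that of Theorem \ref{ati}: since $n \ge 2$, Lemma \ref{Hartogs} gives $c_{\mathbb{D}^n \setminus \{0\}} = c_{\mathbb{D}^n}$, so $\tanh r > \max_i|z_i| = \tanh c_{\mathbb{D}^n}(z, 0)$ would place $0$ in the Carathéodory ball and hence put a punctured neighborhood of $0$ inside $g(B^n)$. Extending $g^{-1}$ across $0$ via Hartogs and analyzing the extended value $G(0) \in \overline{B^n}$ (if $G(0) \in B^n$ then $g$ hits $0$, and if $G(0) \in \partial B^n$ then the maximum principle forces $g^{-1}$ constant) gives the required contradiction.

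For the sharper bound $\tilde{e}(z) \le 1/\sqrt{n}$, I compose $g$ with $\phi$ to get an embedding $h = \phi \circ g: B^n \to \mathbb{D}^n$ with $h(0) = 0$ and $h(B^n) \subset \mathbb{D}^n \setminus \{\phi(0)\}$, where $|\phi(0)_i| = |z_i|$. Combined with the previous estimate $\tanh r \le \max_i|z_i|$, this forces $\phi(0) \notin \mathbb{D}^n_{\tanh r} = B^c_{\mathbb{D}^n}(0, r)$ and hence $\mathbb{D}^n_{\tanh r} \subset h(B^n)$. Setting $A = h'(0)$, I then pass to linear algebra at the origin: the Schwarz lemma applied componentwise to $h_i: B^n \to \mathbb{D}$ gives every row of $A$ with $\ell^2$-norm at most $1$, while the Kobayashi-decreasing property applied to $h^{-1}|_{\mathbb{D}^n_{\tanh r}}: \mathbb{D}^n_{\tanh r} \to B^n$ yields $\|A^{-1}\|_{\infty \to 2} \le 1/\tanh r$.

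The key step (and main obstacle) is the matching lower estimate $\|A^{-1}\|_{\infty \to 2} \ge \sqrt{n}$. From $AA^{-1} = I$ and Cauchy--Schwarz on each diagonal entry, every column of $A^{-1}$ has $\ell^2$-norm at least $1$, so $\|A^{-1}\|_F^2 \ge n$; the phase-averaging identity $\mathbb{E}_\theta \|A^{-1} v_\theta\|_2^2 = \|A^{-1}\|_F^2$ for $v_\theta = (e^{i\theta_1}, \dots, e^{i\theta_n})$ (which has $\|v_\theta\|_\infty = 1$) then produces $\|A^{-1}\|_{\infty \to 2}^2 \ge n$, and combining with the upper bound gives $\tanh r \le 1/\sqrt{n}$. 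The ``in particular'' claim is immediate: for $\|z\| \ge 1$, Cauchy--Schwarz gives $\max_i|z_i| \ge \|z\|/\sqrt{n} \ge 1/\sqrt{n}$, so both the upper and lower bounds collapse to $1/\sqrt{n}$.
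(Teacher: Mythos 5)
Your proposal is correct, and the overall skeleton matches the paper's: the same explicit competitor $w\mapsto\frac{1}{\sqrt n}\,\psi(w)$ (an automorphism of $\dv^n$ moving $z$ to $0$, followed by the scaling $\dv^n\to B^n$) for the lower bound, Lemma \ref{es} for the middle inequality, and a removable-singularity argument for $\tilde e\le\max_i|z_i|$ (the paper phrases this last step as ``a punctured neighborhood of $0$ inside the pseudoconvex set $h(B^n)$ is impossible,'' whereas you extend $g^{-1}$ across the puncture and run the maximum principle as in Theorem \ref{ati}; the two are interchangeable). The genuine difference is the bound $\tilde e_{\dv^n\setminus\{0\}}(z)\le\frac{1}{\sqrt n}$: the paper simply cites Alexander's Proposition 2, while you reprove it. Your reduction to an embedding $h=\phi\circ g:B^n\to\dv^n$ with $h(0)=0$ and $\dv^n_{\tanh r}\subset h(B^n)$ is sound (the puncture $\phi(0)$ lies outside $\dv^n_{\tanh r}$ precisely because of the already-established bound $\tanh r\le\max_i|z_i|$, a point you correctly flag), and the linear algebra at the origin checks out: Schwarz on each component gives rows of $A=h'(0)$ of $\ell^2$-norm $\le1$, the metric-decreasing property of $h^{-1}|_{\dv^n_{\tanh r}}$ gives $\|A^{-1}\|_{\infty\to2}\le1/\tanh r$, Cauchy--Schwarz on the diagonal of $AA^{-1}=I$ forces $\|A^{-1}\|_F^2\ge n$, and the phase-averaging identity converts the Frobenius bound into $\|A^{-1}\|_{\infty\to2}\ge\sqrt n$. (You use that $A$ is invertible; this is automatic since an injective holomorphic map between equidimensional domains has nonvanishing Jacobian, but a sentence saying so would not hurt.) What your route buys is a self-contained proof of the extremal constant $1/\sqrt n$ rather than an appeal to \cite{Alex}; what the paper's route buys is brevity. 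The ``in particular'' computation at the end is the same trivial averaging estimate in both.
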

\begin{proof}
Fix any $z\in\mathbb{D}^{n}\backslash \{0\}$. Let $\psi$ be the automorphism of $\dv^n$ such that $\psi(z)=0$ and $\psi(0)=z$. Denote $g(w)=\frac{1}{\sqrt{n}}w$, which is an injective holomorphic map from $\mathbb{D}^{n}$ into $B^n$. Let $f(w)=g\circ\psi(w)$. Clearly $f$ is an injective holomorphic map from $\mathbb{D}^{n}\backslash \{0\}$ into $B^n$ such that $f(z)=0$ and $f(0)=\frac{1}{\sqrt{n}}z$. Thus,
$$B^{n}\left(0,\min\left\{\frac{\|z\|}{\sqrt{n}},\frac{1}{\sqrt{n}}\right\}\right)\subset f(\mathbb{D}^{n}\backslash \{0\}).$$
Therefore, we have
$$s_{\mathbb{D}^{n}\backslash \{0\}}(z)\ge\min\left\{\frac{\|z\|}{\sqrt{n}},\frac{1}{\sqrt{n}}\right\}.$$

Now consider any holomorphic embedding $h: B^n \rightarrow \mathbb{D}^{n}\backslash \{0\}$ with $h(0)=z$. Note that $\ds \tanh c_{\mathbb{D}^n}(z,0)=\max_{1\leq i\leq n}{\left|z_{i}\right|}$. Thus, if $\ds \tilde{e}_{\mathbb{D}^{n}\backslash \{0\}}(z)>\max_{1\leq i\leq n}{\left|z_{i}\right|}$, then $h(B^n)$ contains a punctured neighborhood of the origin, contradicting the fact that $h(B^n)$ is pseudoconvex. Therefore, we have $\ds \tilde{e}_{\mathbb{D}^{n}\backslash \{0\}}(z)\leq \max_{1\leq i\leq n}{\left|z_{i}\right|}$.

The fact that $\ds \tilde{e}_{\mathbb{D}^{n}\backslash \{0\}}(z)\le \frac{1}{\sqrt{n}}$ follows from \cite[Proposition 2]{Alex}.
\end{proof}

\begin{theorem}
Let $\dv^\ast=\mathbb{D}\backslash\{0\}$. For any $z\in \dv^\ast\times \dv^{n-1}$, $n\ge 1$, we have
$$\frac{|z_{1}|}{\sqrt{1+(n-1)|z_{1}|^2}}\leq s_{\dv^\ast\times \mathbb{D}^{n-1}}(z)\leq \tilde{e}_{\dv^\ast\times\mathbb{D}^{n-1}}(z)\leq \min\left\{|z_{1}|,\frac{1}{\sqrt{n}}\right\}.$$
\end{theorem}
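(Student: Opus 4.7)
The plan is to establish the three inequalities separately; the middle one $s_{\dv^\ast\times\dv^{n-1}}(z)\leq\tilde{e}_{\dv^\ast\times\dv^{n-1}}(z)$ is immediate from Lemma~\ref{es}, so the work lies in the two outer bounds. For the lower bound I would construct an injective holomorphic embedding $f:\dv^\ast\times\dv^{n-1}\to B^n$ with $f(z)=0$ as a coordinatewise product: choose automorphisms $\phi_i\in\aut(\dv)$ with $\phi_i(z_i)=0$ (so that $|\phi_i(0)|=|z_i|$) and positive weights $\alpha_1,\ldots,\alpha_n$ with $\sum_i\alpha_i^2\leq 1$, and set
\[
f(w_1,\ldots,w_n)=\bigl(\alpha_1\phi_1(w_1),\,\alpha_2\phi_2(w_2),\,\ldots,\,\alpha_n\phi_n(w_n)\bigr).
\]
Then $\|f(w)\|^2<\sum_i\alpha_i^2\leq 1$ and $f(z)=0$. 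The image is $\bigl(\alpha_1\dv\setminus\{\alpha_1\phi_1(0)\}\bigr)\times\alpha_2\dv\times\cdots\times\alpha_n\dv$, so the largest Euclidean ball about $0$ inside it has radius $\min(\alpha_1|z_1|,\alpha_2,\ldots,\alpha_n)$, constrained by the excluded point in the first factor and the widths of the remaining factors. Optimizing this minimum under $\sum_i\alpha_i^2=1$ by taking $\alpha_1|z_1|=\alpha_2=\cdots=\alpha_n$ yields $|z_1|/\sqrt{1+(n-1)|z_1|^2}$, which is the stated lower bound on $s_{\dv^\ast\times\dv^{n-1}}(z)$.

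For the upper bound $\tilde{e}_{\dv^\ast\times\dv^{n-1}}(z)\leq|z_1|$, I would mimic the Riemann-removable-singularity argument from the proof of Theorem~\ref{ati}, now with the ball $B^n$ playing the role of the source. Assume for contradiction that $\tilde{e}(z)>|z_1|$, so that for some $r$ with $\tanh(r)>|z_1|$ there is an injective holomorphic $f:B^n\to D:=\dv^\ast\times\dv^{n-1}$ with $f(0)=z$ and $B^c_D(z,r)\subset f(B^n)$. Theorem~\ref{ati} applied with $\Omega=\dv^n$ and $H=A=\{0\}\times\dv^{n-1}$ identifies $c_D$ with the restriction of $c_{\dv^n}$, and the product formula \eqref{E:cD} realizes $B^c_{\dv^n}(z,r)$ as a product of hyperbolic disks. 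Since $\tanh(r)>|z_1|=\tanh c_{\dv}(z_1,0)$, the point $(0,z_2,\ldots,z_n)$ lies in the interior of the first factor, so $f(B^n)\supset B^c_D(z,r)$ contains an open set of the form $(U_1\setminus\{0\})\times U_2\times\cdots\times U_n$ about it. Riemann's removable singularity theorem extends $f^{-1}$ across the hyperplane slice $\{0\}\times U_2\times\cdots\times U_n$ to a holomorphic map $G:U_1\times\cdots\times U_n\to\overline{B^n}$. Arguing as in Theorem~\ref{ati}, $G$ must send this hyperplane slice into $\partial B^n$: otherwise $f\circ G=\mathrm{id}$ would propagate by analytic continuation and push the slice into $\{w_1=0\}\cap D=\emptyset$. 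The plurisubharmonic maximum principle applied to $\|G\|^2$ then forces $\|G\|\equiv 1$ on $U_1\times\cdots\times U_n$, contradicting $\|G\|<1$ on the dense subset $(U_1\setminus\{0\})\times U_2\times\cdots\times U_n$ where $G=f^{-1}$.

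The remaining bound $\tilde{e}_{\dv^\ast\times\dv^{n-1}}(z)\leq 1/\sqrt{n}$ I would read off directly from \cite[Proposition 2]{Alex}, exactly as in the last line of the proof of Theorem~\ref{BP}. The main obstacle is the Riemann-extension step: one must first invoke Theorem~\ref{ati} to trade $c_D$ for $c_{\dv^n}$ and then use \eqref{E:cD} to see that a Carath\'eodory ball of radius exceeding $\tanh^{-1}|z_1|$ genuinely encloses an annular neighborhood of a point on the removed hyperplane, so that Riemann's theorem applies, and afterwards verify that the extended inverse is actually driven onto $\partial B^n$ so that the plurisubharmonic maximum principle can close the argument. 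The lower bound, by contrast, is a routine extremal calculation once the product form of $f$ is guessed.
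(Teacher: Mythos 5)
Your argument is correct, but it takes a more self-contained route than the paper on both outer bounds. For the lower bound the paper simply quotes $s_{\dv^\ast}(z)=|z|$ from \cite[Corollary 7.2]{Deng2012} together with the product estimate of \cite[Theorem 3]{NTT}; your weighted product map $(\alpha_1\phi_1,\ldots,\alpha_n\phi_n)$ is in effect a direct proof of that product estimate specialized to this case, and the optimization $\alpha_1|z_1|=\alpha_2=\cdots=\alpha_n$ under $\sum_i\alpha_i^2=1$ reproduces the same constant. For the upper bound $\tilde{e}_{\dv^\ast\times\dv^{n-1}}(z)\le|z_1|$ the paper only says the argument is ``very similar to the proof of Theorem \ref{BP}'' and omits it; note, however, that the mechanism of Theorem \ref{BP} --- the embedded ball would contain a punctured neighborhood of the deleted point, contradicting pseudoconvexity --- does not transfer verbatim here, because the deleted set $\{w_1=0\}$ has codimension one and $(U_1\setminus\{0\})\times U_2\times\cdots\times U_n$ is itself pseudoconvex, so mere pseudoconvexity of $f(B^n)$ yields no contradiction. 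Your substitute --- identifying $c_D$ with the restriction of $c_{\dv^n}$ by removable singularities, using \eqref{E:cD} to see that the Carath\'eodory ball wraps around a piece of the hyperplane, extending the bounded inverse $f^{-1}$ across the slice, showing the extension $G$ maps the slice into $\partial B^n$, and applying the plurisubharmonic maximum principle to $\|G\|^2$ --- is exactly the mechanism of Theorem \ref{ati} and is the right way to close this step; it supplies details the paper leaves implicit. The middle inequality via Lemma \ref{es} and the bound $1/\sqrt{n}$ via \cite[Proposition 2]{Alex} match the paper.
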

\begin{proof}
By \cite[Corollary 7.2]{Deng2012}, we know that $s_{\dv^\ast}(z)=|z|$. Then, by \cite[Theorem 3]{NTT}, we get
$$s_{\dv^\ast\times\mathbb{D}^{n-1}}(z)\ge \frac{|z_{1}|}{\sqrt{1+(n-1)|z_{1}|^2}}.$$

The argument for $\ds \tilde{e}_{\dv^\ast\times\mathbb{D}^{n-1}}(z)\leq \min\left\{|z_{1}|,\frac{1}{\sqrt{n}}\right\}$ is very similar to the proof of Theorem \ref{BP}. We omit the details.
\end{proof}

\begin{theorem}
For any $z\in B^{n}\backslash \{0\}$, $n\ge 2$, we have
$$s^{\mathbb{D}^n}_{B^n\backslash\{0\}}(z)=\tilde{e}^{\mathbb{D}^n}_{B^n\backslash\{0\}}(z)=\min\left\{\|z\|,\frac{1}{\sqrt{n}}\right\}.$$
\end{theorem}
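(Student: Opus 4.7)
The plan is to sandwich both $s^{\mathbb{D}^n}_{B^n\setminus\{0\}}(z)$ and $\tilde{e}^{\mathbb{D}^n}_{B^n\setminus\{0\}}(z)$ between the common value $\min\{\|z\|,1/\sqrt n\}$, so that the chain $s\leq\tilde e$ of Lemma~\ref{es} collapses to equality. The lower bound will come from an explicit construction; the upper bound will split into a pseudoconvexity argument (for the $\|z\|$ piece) and an infinitesimal Carath\'eodory comparison (for the $1/\sqrt n$ piece).

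For $s^{\mathbb{D}^n}_{B^n\setminus\{0\}}(z)\geq \min\{\|z\|,1/\sqrt n\}$, I would exploit the homogeneity of $B^n$ together with a unitary rotation: choose $\psi\in\aut(B^n)$ with $\psi(z)=0$ and a unitary $U$ with $U\psi(0)=(\|z\|,0,\dots,0)$. Then $f:=U\circ\psi$ restricts to an injective holomorphic map $B^n\setminus\{0\}\to\mathbb{D}^n$ (using $B^n\subset\mathbb{D}^n$) sending $z$ to $0$, with image $B^n\setminus\{(\|z\|,0,\dots,0)\}$. For $r=\min\{\|z\|,1/\sqrt n\}$, the polydisk $B^{\rho_{\mathbb{D}^n}}_n(0,r)=r\mathbb{D}^n$ lies inside $B^n$ since $r\leq 1/\sqrt n$, and misses $(\|z\|,0,\dots,0)$ since $r\leq\|z\|$, giving the required containment.

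For the upper bound, let $f:\mathbb{D}^n\to B^n\setminus\{0\}$ be injective holomorphic with $B^c_{B^n\setminus\{0\}}(z,r)\subset f(\mathbb{D}^n)$. By Theorem~\ref{ati} (applied with $\Omega=B^n$ and $H=\{0\}$ of codimension $n\geq 2$), $c_{B^n\setminus\{0\}}=c_{B^n}$ on $B^n\setminus\{0\}$, and by \eqref{E:cB}, $\tanh c_{B^n}(z,0)=\|z\|$. If $\tanh r>\|z\|$, then $B^c_{B^n\setminus\{0\}}(z,r)$ contains a punctured Euclidean neighbourhood of $0$, so the biholomorphic image $f(\mathbb{D}^n)$ would have $0$ as an isolated boundary point -- contradicting pseudoconvexity, exactly as in Theorem~\ref{BP}. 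Hence $\tanh r\leq\|z\|$.

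The remaining bound $\tanh r\leq 1/\sqrt n$ is the most delicate step. Normalize by picking $\phi\in\aut(\mathbb{D}^n)$ with $\phi(0)=f^{-1}(z)$ and $\psi\in\aut(B^n)$ with $\psi(z)=0$, and form $\tilde f:=\psi\circ f\circ\phi:\mathbb{D}^n\to B^n$ with $\tilde f(0)=0$. Since $\|\psi(0)\|=\|z\|\geq\tanh r$, the deleted point $\psi(0)$ lies outside the Euclidean ball $B^n(0,\tanh r)=\psi(B^c_{B^n}(z,r))$, so this whole ball sits inside $\tilde f(\mathbb{D}^n)$. The infinitesimal Carath\'eodory distance-decreasing property applied at $0$ to $\tilde f$ and to the local inverse $\tilde f^{-1}$ on $B^n(0,\tanh r)$ yields, for $B:=d\tilde f(0)$, the mixed-norm estimates $\|Bv\|_2\leq\|v\|_\infty$ for all $v$ and $\|B^{-1}u\|_\infty\leq\|u\|_2/\tanh r$ for all $u$. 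Averaging $\|Bv\|_2^2$ over independent uniform phases with $|v_i|=1$ collapses to $\sum_{i,j}|B_{ij}|^2\leq 1$, i.e.\ $\sum_i\sigma_i^2\leq 1$ for the singular values $\sigma_i$ of $B$; Cauchy--Schwarz on $(\sigma_i,1/\sigma_i)$ then forces $\sum_i 1/\sigma_i^2\geq n^2$, so some row of $B^{-1}$ has squared $\ell^2$-norm at least $n$, whence $1/\tanh r\geq\sqrt n$. I expect this final algebraic comparison of mixed operator norms to be the main obstacle; it is the direct counterpart of \cite[Proposition 2]{Alex} invoked in Theorem~\ref{BP}, and the crucial identity is the uniform phase-averaging formula $\mathbb{E}\|Bv\|_2^2=\sum_{i,j}|B_{ij}|^2$.
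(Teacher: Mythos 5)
Your proposal is correct, and for two of its three components it coincides with the paper's proof: the lower bound $s^{\mathbb{D}^n}_{B^n\setminus\{0\}}(z)\ge\min\{\|z\|,1/\sqrt n\}$ via the composition of an automorphism of $B^n$ sending $z$ to $0$, a unitary normalization, and the inclusion $B^n\subset\mathbb{D}^n$ is exactly the paper's map $f=\varphi\circ g\circ\psi$, and the bound $\tilde e^{\mathbb{D}^n}_{B^n\setminus\{0\}}(z)\le\|z\|$ via pseudoconvexity of $h(\mathbb{D}^n)$ versus a punctured neighbourhood of the origin is also verbatim the paper's argument. The one genuine divergence is the bound $\tilde e^{\mathbb{D}^n}_{B^n\setminus\{0\}}(z)\le 1/\sqrt n$: the paper simply cites Alexander's Proposition 1, whereas you give a self-contained proof at the level of the derivative $B=d\tilde f(0)$. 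Your argument checks out: the Carath\'eodory--Reiffen metrics at the origin of $\mathbb{D}^n$, $B^n$ and $B^n(0,\tanh r)$ are $\|\cdot\|_\infty$, $\|\cdot\|_2$ and $\|\cdot\|_2/\tanh r$ respectively, so the decreasing property gives $\|Bv\|_2\le\|v\|_\infty$ and $\|B^{-1}u\|_\infty\le\|u\|_2/\tanh r$; phase-averaging gives $\sum_i\sigma_i^2=\|B\|_F^2\le 1$, Cauchy--Schwarz gives $\|B^{-1}\|_F^2\ge n^2$, hence some row of $B^{-1}$ has $\ell^2$-norm at least $\sqrt n$, and since the $\ell^2\to\ell^\infty$ operator norm is the maximal row norm this forces $\tanh r\le 1/\sqrt n$ (note that $B$ is invertible because $\tilde f$ is an embedding, and that the Euclidean ball $B^n(0,\tanh r)$ is entirely contained in $\tilde f(\mathbb{D}^n)$ precisely because you have already established $\tanh r\le\|z\|$, so the order of the two upper-bound steps matters and you have it right). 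What your route buys is independence from the external reference at the cost of a page of linear algebra; what the paper's route buys is brevity. Two cosmetic points: when $\tanh r=\|z\|$ the deleted point lies on the boundary rather than strictly outside the ball, which is harmless since the ball is open; and the identification $c_{B^n\setminus\{0\}}=c_{B^n}$ that you invoke to realize $B^c_{B^n\setminus\{0\}}(z,r)$ as a Euclidean ball should be credited to the removable-singularity step of Theorem \ref{ati}, as you indeed do.
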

\begin{proof}
Fix any $z\in B^n\backslash \{0\}$. Let $\psi$ be the automorphism of $B^n$ such that $\psi(z)=0$ and $\psi(0)=z$, and $\varphi$ be the automorphism of $B^n$ such that $\varphi(0)=0$ and $\varphi(z)=(\|z\|,0,\cdots,0)$. Denote by $g$ the inclusion map from $B^n$ into $\mathbb{D}^{n}$. Let $f(w)=\varphi\circ g\circ\psi(w)$. Clearly $f$ is an injective holomorphic map from $B^{n}\backslash \{0\}$ into $\dv^n$ such that $f(z)=0$ and $f(0)=(\|z\|,0,\cdots,0)$. Thus,
$$\left\{w\in \dv^n:\ |w_{j}|<\min\left\{\|z\|,\frac{1}{\sqrt{n}}\right\},\ 1\leq j\leq n \right\}\subset f(B^{n}\backslash \{0\}).$$
Therefore, we have
$$s^{\mathbb{D}^n}_{B^n\backslash\{0\}}(z)\ge \min\left\{\|z\|,\frac{1}{\sqrt{n}}\right\}.$$

Now consider any holomorphic embedding $h: \dv^n \rightarrow B^{n}\backslash \{0\}$ with $h(0)=z$. Note that $\tanh c_{B^n}(z,0)=\|z\|$. Thus, if $\tilde{e}^{\mathbb{D}^n}_{B^n\backslash\{0\}}(z)>\|z\|$, then $h(\dv^n)$ contains a punctured neighborhood of the origin, contradicting the fact that $h(\dv^n)$ is pseudoconvex. Therefore, we have $\tilde{e}^{\mathbb{D}^n}_{B^n\backslash\{0\}}(z)\le \|z\|$.

The fact that $\ds \tilde{e}^{\mathbb{D}^n}_{B^n\backslash\{0\}}(z)\le \frac{1}{\sqrt{n}}$ follows from \cite[Proposition 1]{Alex}.
\end{proof}

\section{Non-plurisubharmonic squeezing functions and Fridman invariants}\label{S:nonpsh}

The main goal of this section is to provide some domains whose (generalized) squeezing function or Fridman invariant is not plurisubharmonic.

We first consider domains $D$ with non-plurisubharmonic $s_{D}(z)$ or $\tilde{e}_{D}(z)$.

\begin{theorem}\label{T:4.1}
Let $0<r<1$, $Q=(0,0,\cdots,r)$ and $\epsilon>0$ sufficiently small such that $B^n(Q,\epsilon)\subset B^{n}$, $n\ge 2$. Denote $A=(\partial B^{n}_{r})\backslash B^n(Q,\epsilon)$ and $D=B^n\backslash A$. Then $s_{D}(z)$ and $\tilde{e}_{D}(z)$ are not plurisubharmonic.
\end{theorem}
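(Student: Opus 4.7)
The plan is to proceed in two stages: first, use Theorem \ref{bdary} to obtain an explicit formula for $s_D(z) = \tilde{e}_D(z)$, and then violate the sub-mean-value inequality along a carefully chosen complex line through the origin.

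For the formula, I would apply Theorem \ref{bdary} with $\Omega = B^n$, $G = B^n_r$, and $K = \partial B^n_r \setminus \overline{B^n(Q,\epsilon)}$. Then $K$ is open in $\partial G = \partial B^n_r$, which is strictly pseudoconvex, so pseudoconvex but not Levi flat points are automatically dense in $\partial G$. One checks that $\overline{K} = A$, and $D = B^n \setminus \overline{K}$ is connected because the gap $B^n(Q,\epsilon) \cap \partial B^n_r$ links the inside of $B^n_r$ with its exterior. Theorem \ref{bdary} then gives
$$s_D(z) = \tilde{e}_D(z) = d_{c_{B^n}}^{A}(z) = \min_{P \in A} \tanh c_{B^n}(z,P), \qquad z \in D.$$
By Lemma \ref{lnc}, $\tanh c_{B^n}(0,P) = \|P\| = r$ for every $P \in A$, so $s_D(0) = r$.

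For the non-plurisubharmonicity, I would restrict to the complex line $L = \cv e_n$, where $e_n = (0,\ldots,0,1)$. Since $L \cap B^n$ with the Carath\'eodory metric is isometric to $\dv$ with the Poincar\'e metric, direct optimization of \eqref{E:cB} (minimizing $|1 - \rho e^{i\theta}\overline{P_n}|$ subject to $|P_n| \leq r$) shows
$$\min_{P \in \partial B^n_r} \tanh c_{B^n}(\rho e^{i\theta}e_n, P) = \frac{r-\rho}{1-r\rho},$$
attained uniquely at $P_\theta := re^{i\theta}e_n$. The point $P_\theta$ belongs to $A$ precisely when $|\sin(\theta/2)| \geq \epsilon/(2r)$, and the exceptional angular wedge $W := \{\theta : |\sin(\theta/2)| < \epsilon/(2r)\}$ has Lebesgue measure $O(\epsilon/r)$. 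Outside $W$ one has $s_D(\rho e^{i\theta}e_n) = (r-\rho)/(1-r\rho)$ exactly, while on $W$ we use the trivial bound $s_D \leq 1$. Averaging over the circle $|\zeta| = \rho$ therefore yields
$$\frac{1}{2\pi}\int_0^{2\pi} s_D(\rho e^{i\theta}e_n)\,d\theta \leq \frac{r-\rho}{1-r\rho} + O(\epsilon),$$
which, for fixed sufficiently small $\epsilon$ and for $\rho$ of order $\epsilon/(1-r^2)$ (still small), is strictly less than $r = s_D(0)$. This violates the sub-mean-value inequality for subharmonic functions on $L$, so $s_D$ restricted to $L \cap D$ is not subharmonic, and hence $s_D$ (and equally $\tilde{e}_D$) is not plurisubharmonic on $D$.

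The main technical point is calibrating the two small parameters $\epsilon$ and $\rho$: the wedge $W$ where $P_\theta$ leaves $A$ has angular measure comparable to $\epsilon$, and one must choose $\rho$ large enough that the strict improvement $(r-\rho)/(1-r\rho) < r$ on the non-wedge arc dominates the $O(\epsilon)$ error from $W$. No delicate estimate inside $W$ is needed, because the trivial bound $s_D \leq 1$ already suffices once $\epsilon$ is sufficiently small relative to $\rho(1-r^2)$.
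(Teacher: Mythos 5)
Your proposal is correct: the first step (applying Theorem \ref{bdary} with $G=B^n_r$ and $K=\partial B^n_r\setminus\overline{B^n(Q,\epsilon)}$ to get $s_D=\tilde e_D=d_{c_{B^n}}^{A}$ and $s_D(0)=r$) is exactly what the paper does, and your second step is a valid, if more laborious, way to finish. The difference is the choice of slice. You restrict to the line $\cv e_n$ through $Q$, where the deleted cap captures the minimizer $P_\theta=re^{i\theta}e_n$ for $\theta$ in a wedge of measure $O(\epsilon/r)$, so you are forced into a quantitative violation of the sub-mean-value inequality, with the attendant calibration of $\rho$ against $\epsilon$ (and your stated threshold $\rho\sim\epsilon/(1-r^2)$ is off by a factor of $r$ --- the correct requirement is roughly $\rho>\epsilon/(\pi r(1-r^2))$ --- though this only affects constants, not the conclusion). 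The paper instead restricts to the orthogonal line $H=\{z_2=\cdots=z_n=0\}$: for $z\in B^n_r\cap H$, $z\ne 0$, the unique minimizer over $\partial B^n_r$ is $(rz_1/|z_1|,0,\dots,0)$, which lies at Euclidean distance $r\sqrt2$ from $Q$ and hence always belongs to $A$; so $s_D(z)=\frac{r-\|z\|}{1-r\|z\|}$ holds \emph{exactly} on all of $B^n_r\cap H$, and the non-constant restriction attains a strict interior maximum at $0$, contradicting the maximum principle for subharmonic functions with no measure estimates and no smallness condition on $\epsilon$ beyond $\epsilon<r\sqrt2$. What your route buys is robustness (it would survive even if the cap were removed around every point of some axis), but for this geometry the paper's choice of slice eliminates the perturbation entirely and reduces the second step to one line; note also that your argument genuinely needs ``$\epsilon$ sufficiently small relative to $r$,'' whereas the paper's works for any admissible $\epsilon<r\sqrt2$.
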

\begin{proof}
Denote $H=\left\{z\in \mathbb{C}^n:\ z_2=z_3=\cdots=z_n=0\right\}$. Then $B_r^n\cap H\subset D\cap H$ is a disk of radius $r$ centered at the origin. By Theorem \ref{bdary}, we have
$$s_{D}(z)=\tilde{e}_{D}(z)=d_{c_{B^n}}^{A}(z),\ \ \ z\in D.$$
From \eqref{E:cB}, it is clear that
$$s_{D}(z)=\tilde{e}_{D}(z)=\frac{r-\left\|z \right\|}{1-r\left\|z \right\|},\ \ \ z\in B^{n}_{r}\cap H.$$
Thus, $s_{D}(0)$ is maximal in $B^{n}_{r}\cap H$, showing that $s_{D}(z)$ and $e_{D}(z)$ are not subharmonic in $B^{n}_{r}\cap H$, which implies that $s_{D}(z)$ and $\tilde{e}_{D}(z)$ are not plurisubharmonic.
\end{proof}

Let $0<r<R<1$. For any $z\in \partial B_r^n$, denote $p=\frac{R}{r}z$. Then, from \eqref{E:cB}, we have
$$\tanh c_{B^n}(z,p)=\frac{R-r}{1-rR}<R.$$
Since $c_{B^n}(z_{1},z_{2})$ is continuous, there exists $\delta>0$ such that $B^n(z,\delta)\subset B^n$ and $\tanh c_{B^n}(p,w)<R$ for all $w\in B^n(z,\delta)$.

\begin{theorem}\label{ball}
Let $0<r<R<1$. Choose $\delta>0$ such that for each $z\in \partial B^{n}_{r}$ and $p=\frac{R}{r}z$, we have $B^n(z,\delta)\subset B^n$ and $\tanh c_{B^n}(p,w)<R$ for all $w\in B^n(z,\delta)$. Take $\{z_{i}\}_{i=1}^m\subset \partial B^{n}_{r}$ such that $\partial B^{n}_{r}\subset \bigcup_{i=1}^{m}B^n(z_{i},\delta)$ and set $p_{i}=\frac{R}{r}z_i$. Denote
$$H_{i}=\left\{w\in \cv^n:\ \sum^{n}_{j=1}{\overline{(z_{ij}-p_{ij})}(w_{j}-p_{ij})}=0\right\},\ \ \ 1\le i\le m.$$
Denote $H=\bigcup^{m}_{i=1}H_{i}$ and $D=B^n\backslash H$. Then $s_D(z)$ and $\tilde{e}_D(z)$ are not plurisubharmonic.
\end{theorem}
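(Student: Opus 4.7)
The plan is to parallel the argument of Theorem \ref{T:4.1}: obtain a closed form for $s_D$ and $\tilde e_D$ via Theorem \ref{ati}, show that both functions equal $R$ at the origin while sitting strictly below $R$ on the entire sphere $\partial B_r^n$, and then derive a contradiction with the sub-mean-value inequality on a complex line through $0$.

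First I would observe that $H=\bigcup_{i=1}^{m}H_i$ is a proper analytic subset of $B^n$. Since $z_{ij}-p_{ij}=-\tfrac{R-r}{R}\,p_{ij}$, the defining equation of $H_i$ reduces to $\langle w,p_i\rangle=R^{2}$, so $H_i$ is the complex supporting hyperplane of $B_R^n$ at $p_i$. The Cauchy--Schwarz inequality then yields $H_i\cap\overline{B_R^n}=\{p_i\}$; in particular $H\cap B_R^n=\emptyset$, and the point of $H_i$ closest to $0$ is $p_i$ itself, with $\|p_i\|=R$. Applying Theorem \ref{ati} with $\Omega=B^n$ and $A=H$ produces
\[
s_D(z)=\tilde e_D(z)=\min_{w\in H}\tanh c_{B^n}(z,w),\qquad z\in D,
\]
and Lemma \ref{lnc} evaluated at $z=0$ gives $s_D(0)=\tilde e_D(0)=\min_{1\le i\le m}\|p_i\|=R$.

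Next I would exploit the chosen $\delta$-net: for every $w\in\partial B_r^n$, pick some $i$ with $w\in B^n(z_i,\delta)$, so by the defining property of $\delta$ we have $\tanh c_{B^n}(p_i,w)<R$. Since $p_i\in H$, this forces $s_D(w)\le\tanh c_{B^n}(p_i,w)<R$. Now fix an arbitrary unit vector $v\in\cv^n$ and set $L=\cv\cdot v$. The closed disk $\overline\Delta=\{\lambda v:|\lambda|\le r\}$ lies inside $\overline{B_r^n}\subset B_R^n\subset D$, so $\overline\Delta\subset L\cap D$. Because $s_D$ is continuous on $D$ (being the minimum of finitely many continuous functions $\tanh c_{B^n}(\,\cdot\,,H_i)$) and the circle $\partial\Delta\subset\partial B_r^n$ is compact, we obtain $\sup_{\partial\Delta}s_D<R$.

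Were $s_D$ plurisubharmonic, its restriction to $L\cap D$ would be subharmonic on $\Delta$, and the sub-mean-value inequality would give
\[
R=s_D(0)\;\le\;\frac{1}{2\pi}\int_{0}^{2\pi}s_D(re^{i\theta}v)\,d\theta\;<\;R,
\]
a contradiction; the identical argument applied to $\tilde e_D$ completes the proof. The main bookkeeping obstacle is the geometric identification of each $H_i$ as the complex tangent hyperplane to $\partial B_R^n$ at $p_i$: this both places $\overline{B_r^n}$ inside $D$ (so the disk $\overline\Delta$ is admissible) and pins $s_D(0)$ to precisely $R$, allowing the $\delta$-net estimate to yield the strict gap $s_D<R$ on $\partial B_r^n$ needed to defeat subharmonicity.
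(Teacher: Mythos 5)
Your proposal is correct and follows essentially the same route as the paper: apply Theorem \ref{ati} to get $s_D=\tilde e_D=d_{c_{B^n}}^H$, compute $s_D(0)=R$, use the $\delta$-net to get $s_D<R$ on all of $\partial B_r^n$, and contradict plurisubharmonicity. The only difference is that you spell out two steps the paper leaves implicit --- the identification of each $H_i$ as the complex tangent hyperplane to $\partial B_R^n$ at $p_i$ (behind the paper's ``one readily checks that $s_D(0)=R$'') and the sub-mean-value inequality on a complex line through the origin (behind the paper's ``clearly'') --- both of which you handle correctly.
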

\begin{proof}
By Theorem \ref{ati}, we know that $s_D(z)=\tilde{e}_D(z)=d_{c_{B^n}}^H(z)$ for all $z\in D$, and $s_{B^n\backslash H_i}(z)=\tilde{e}_{B^n\backslash H_i}(z)=d_{c_{B^n}}^{H_i}(z)$ for all $z\in B^n\backslash H_i$. From \eqref{E:cB}, one readily checks that $s_D(0)=R$. It is also clear that $s_D(z)\leq s_{B^n\backslash H_{i}}(z)$ for $z\in D$.

Let $w\in \partial B^{n}_{r}$. Since $\partial B^{n}_{r}\subset \bigcup_{i=1}^{m}B^n(z_{i},\delta)$, we have $w\in \partial B^{n}_{r}\cap B^n(z_{i},\delta)$ for some $i$. Then we get $\tanh c_{B^n}(p_{i},w)<R$, which implies that $s_D(w)\leq s_{B^n\backslash H_{i}}(w)<R=s_D(0)$. Thus, we have $s_D(w)<s_D(0)$ for all $w\in \partial B_r^n$. Clearly, this implies that $s_D(z)$ is not plurisubharmonic.
\end{proof}

Note that $B^n\backslash H$ is pseudoconvex. In fact, let $f_{i}(w)=\sum^{n}_{j=1}\overline{(z_{ij}-p_{ij})}(w_{j}-p_{ij})$, $1\le i\le m$, and $g(w)=\frac{1}{f_{1}(w)f_{2}(w)\cdots f_{m}(w)}$. It is easy to see that $g(w)\rightarrow \infty$, as $w \rightarrow H$.

\begin{corollary}\label{usq}
Let $0<r<R<1$. Choose $\delta>0$ such that for each $z\in \dv_r$ and $p=\frac{R}{r}z$, we have $\dv(z,\delta)\subset \dv$ and $\tanh c_{\dv}(p,w)<R$ for all $w\in \dv(z,\delta)$. Take $\{z_{i}\}_{i=1}^m\subset \partial \dv_{r}$ such that $\partial \dv_{r}\subset \bigcup_{i=1}^{m} \dv(z_{i},\delta)$ and set $p_{i}=\frac{R}{r}z_i$. Denote $H=\{p_i\}_{i=1}^m$ and $D=\dv\backslash H$. Then $s_D(z)$ and $\tilde{e}_D(z)$ are not subharmonic.
\end{corollary}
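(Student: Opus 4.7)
The plan is to mirror the argument of Theorem~\ref{ball} in the one-dimensional setting. First I would apply Theorem~\ref{ati} with $\Omega=\mathbb{D}$ and $A=H=\{p_{1},\dots,p_{m}\}$, which is legitimate since a finite subset of $\mathbb{D}$ is a proper analytic subset (and is already closed). This gives
$$s_{D}(z)=\tilde{e}_{D}(z)=d_{c_{\mathbb{D}}}^{H}(z)=\min_{1\le i\le m}\tanh[c_{\mathbb{D}}(z,p_{i})],\qquad z\in D,$$
reducing the problem to analyzing this explicit function.

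Specializing \eqref{E:cB} to $n=1$ and $a=0$ gives $\tanh c_{\mathbb{D}}(0,p_{i})=|p_{i}|=R$ for every $i$, hence $s_{D}(0)=R$. On the other hand, any $w\in\partial\mathbb{D}_{r}$ lies in $\mathbb{D}(z_{i},\delta)$ for some $i$, and by the defining property of $\delta$ we have $\tanh c_{\mathbb{D}}(p_{i},w)<R$. Therefore
$$s_{D}(w)=d_{c_{\mathbb{D}}}^{H}(w)\le \tanh c_{\mathbb{D}}(p_{i},w)<R=s_{D}(0)$$
for every $w\in\partial\mathbb{D}_{r}$.

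Since the points $p_{i}$ all have modulus $R>r$, the entire circle $|z|=r$ lies inside $D$, and $s_{D}$ is continuous there (it is a minimum of finitely many continuous functions). If $s_{D}$ were subharmonic on $D$, the sub-mean-value inequality would force $s_{D}(0)\le (2\pi)^{-1}\int_{0}^{2\pi}s_{D}(re^{i\theta})\,d\theta$, whose right-hand side is strictly less than $R=s_{D}(0)$, a contradiction. The identical reasoning rules out subharmonicity of $\tilde{e}_{D}$. I do not anticipate any serious obstacle: the argument is essentially the one-dimensional specialization of Theorem~\ref{ball}, with plurisubharmonicity replaced by subharmonicity, and the only point worth verifying is that Theorem~\ref{ati} genuinely applies at $n=1$, which it does since $H$ is a proper analytic subset of $\mathbb{D}$.
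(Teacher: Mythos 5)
Your proposal is correct and is essentially the paper's own argument: the paper states Corollary~\ref{usq} as the one-dimensional specialization of Theorem~\ref{ball}, which likewise combines Theorem~\ref{ati} (valid for $n\ge 1$, with the finite set $H$ as a proper analytic subset) with the computation $s_D(0)=R$ and the bound $s_D(w)<R$ on $\partial\mathbb{D}_r$. Your explicit verification of the sub-mean-value contradiction over the circle $|z|=r$ is exactly the intended final step.
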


\begin{corollary}
Let $\{p_i\}_{i=1}^m$ be as in Corollary \ref{usq}. Let $\epsilon>0$ such that $\overline{\dv(p_{i},\epsilon)}\cap \overline{\dv(p_{j},\epsilon)}=\emptyset$ for $i\neq j$. Denote $H_{\epsilon}=\bigcup_{i=1}^{m} (\overline{\dv(p_{i},\epsilon)}\cap \partial \dv_R)$ and $D_{\epsilon}=\mathbb{D}\backslash H_{\epsilon}$. Then $s_{D_{\epsilon}}(z)$ is not subharmonic for sufficiently small $\epsilon$.
\end{corollary}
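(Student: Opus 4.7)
The plan is to contradict subharmonicity at $0$ via the mean-value inequality on the disk $\mathbb{D}_r$. Since $H_{\epsilon}\subset \partial \mathbb{D}_R$, the disk $\mathbb{D}_R$ lies inside $D_\epsilon$, so the identity map is an admissible embedding, witnessing $s_{D_\epsilon}(0)\geq R$. By Corollary \ref{usq} and continuity of $s_D$, $s_D\leq R-\eta$ uniformly on $\partial \mathbb{D}_r$ for some $\eta>0$, where $D:=\mathbb{D}\setminus\{p_1,\ldots,p_m\}$. The main remaining ingredient is the pointwise upper semicontinuity $\limsup_{\epsilon\to 0^+}s_{D_\epsilon}(w)\leq s_D(w)$ for every $w\in D$; once this is in place, $s_{D_\epsilon}\leq 1$ globally and reverse Fatou give
\[
\limsup_{\epsilon\to 0}\frac{1}{2\pi}\int_0^{2\pi}s_{D_\epsilon}(re^{i\theta})\,d\theta\leq R-\eta<R\leq s_{D_\epsilon}(0),
\]
so for all sufficiently small $\epsilon$ the mean-value inequality fails at $0$.

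To prove the semicontinuity, fix $w\in D$, pick $\epsilon_n\to 0$ and extremal embeddings $f_n:D_{\epsilon_n}\to\mathbb{D}$ with $f_n(w)=0$, $r_n\mathbb{D}\subset f_n(D_{\epsilon_n})$ and $r_n\to L:=\limsup s_{D_{\epsilon_n}}(w)$. Every compact subset of $D$ is eventually inside $D_{\epsilon_n}$, so Montel yields a subsequential limit $f_n\to f$ locally uniformly on $D$; Hurwitz together with a Schwarz--Pick estimate applied to $f_n^{-1}$ rules out constancy, so $f$ is injective, and Riemann's removable singularity theorem extends it to an injective holomorphic $\widetilde f:\mathbb{D}\to\mathbb{D}$ with $\widetilde f(w)=0$. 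For any $\zeta_0\in r_0\mathbb{D}$ with $r_0<L$, the preimage $z_n:=f_n^{-1}(\zeta_0)$ stays in a fixed compact subset of $\mathbb{D}$ (by Schwarz--Pick applied to $f_n^{-1}:r_n\mathbb{D}\to\mathbb{D}$), and a subsequence converges to some $z^*$. Either $z^*\in D$, giving $\zeta_0=f(z^*)\in f(D)$, or $z^*=p_j$ for some $j$; in the latter case a Cauchy-integral argument---using that $f_n\to\widetilde f$ uniformly on a small circle around $p_j$ in $D$ and that the loop contribution from the shrinking arc $A_j^{(\epsilon_n)}$ is $O(\epsilon_n)$---forces $\zeta_0=\widetilde f(p_j)$.

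The main obstacle is ruling out this second alternative. The key is a Carath\'eodory-distance blow-up: because the arc $A_j^{(\epsilon_n)}$ has positive analytic capacity, one can produce $g\in O(D_{\epsilon_n},\mathbb{D})$ with $g(w)=0$ and $|g(z)|\to 1$ as $z\to A_j^{(\epsilon_n)}$ (locally $D_{\epsilon_n}$ is a ``disk minus a slit'', conformally an annulus; pulling back $u\mapsto\rho/u$ under this identification and post-composing with a suitable M\"obius does the job), and hence $c_{D_{\epsilon_n}}(z_n,w)\to\infty$ as $z_n\to p_j$. On the other hand, $f_n$ is a biholomorphism onto its image and $r_n\mathbb{D}\subset f_n(D_{\epsilon_n})$, so
\[
c_{D_{\epsilon_n}}(z_n,w)=c_{f_n(D_{\epsilon_n})}(\zeta_0,0)\leq c_{r_n\mathbb{D}}(\zeta_0,0)=\tanh^{-1}\bigl(|\zeta_0|/r_n\bigr).
\]
Combining these forces $|\zeta_0|/r_n\to 1$; since $r_n\to L$ and $|\zeta_0|<r_0<L$ is fixed, this is a contradiction. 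Hence $r_0\mathbb{D}\subset f(D)$ for every $r_0<L$, so using $f=\widetilde f|_D$ as an admissible embedding for $s_D(w)$ gives $s_D(w)\geq r_0$, and letting $r_0\nearrow L$ yields $L\leq s_D(w)$, completing the proof.
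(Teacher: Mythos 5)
Your overall architecture is sound and genuinely different from the paper's: the paper simply invokes the exhaustion theorem \cite[Theorem 3.8]{RY} to get $s_{D_{\epsilon}}\to s_{D}$ locally uniformly (where $D=\mathbb{D}\backslash\{p_1,\dots,p_m\}$) and then transfers a strict violation of the sub-mean-value inequality from $D$ to $D_{\epsilon}$, whereas you anchor at the origin with the exact bound $s_{D_{\epsilon}}(0)\ge R$ (valid for every $\epsilon$, since $H_{\epsilon}\subset\partial\mathbb{D}_R$) and then only need the pointwise upper semicontinuity $\limsup_{\epsilon\to 0}s_{D_{\epsilon}}\le s_{D}$ on $\partial\mathbb{D}_r$ together with reverse Fatou. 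That framework is fine and in some respects more economical. The problem is your proof of the semicontinuity, specifically the step ruling out the alternative $z^{*}=p_j$.

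The claim that $c_{D_{\epsilon_n}}(z_n,w)\to\infty$ as $z_n\to p_j$ is a statement about a diagonal sequence in a family of shrinking domains, and it is false. For each \emph{fixed} $n$ the arc $A_j^{(\epsilon_n)}$ is a nondegenerate boundary continuum and $c_{D_{\epsilon_n}}(\cdot,w)$ does blow up at it, but the blow-up rate degenerates as $\epsilon_n\to 0$: in the limit the arc becomes a point and $c_{\mathbb{D}\backslash\{p_j\}}=c_{\mathbb{D}}$ is bounded near $p_j$. Concretely, take $|z_n-p_j|=\sqrt{\epsilon_n}$. For any $g\in O(D_{\epsilon_n},\mathbb{D})$ with $g(w)=0$, the Laurent coefficients of the principal part of $g$ on the annulus $\{\epsilon_n<|z-p_j|<\rho\}$ satisfy $|a_{-k}|\le\epsilon_n^{k}$, so the principal parts are $O(\sqrt{\epsilon_n})$ at $z_n$ and $O(\epsilon_n)$ away from the holes; hence $g$ differs at $z_n$ by $o(1)$ from a function holomorphic on all of $\mathbb{D}$, bounded by $1+o(1)$ and vanishing up to $o(1)$ at $w$. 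Schwarz--Pick then gives $\tanh^{-1}|g(z_n)|\le c_{\mathbb{D}}(w,p_j)+o(1)$, i.e.\ $c_{D_{\epsilon_n}}(z_n,w)$ stays bounded, and your Carath\'eodory-distance contradiction never materializes. Since $f(D)=\widetilde f(\mathbb{D})\backslash\{\widetilde f(p_1),\dots,\widetilde f(p_m)\}$ necessarily omits the values $\widetilde f(p_j)$, you cannot conclude $r_0\mathbb{D}\subset f(D)$ without excluding $\widetilde f(p_j)\in r_0\mathbb{D}$, and the soft alternatives (openness of the limit map, the argument principle around $p_j$) do not exclude it either, because $\mathbb{D}\backslash\{p_1,\dots,p_m\}$ is not the interior of its closure. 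You should either quote the exhaustion theorem for squeezing functions under increasing unions, as the paper does, or supply a genuinely different argument for the case $z^{*}=p_j$.
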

\begin{proof}
Clearly, $D_{\epsilon}\subset D$, $D_{\epsilon_{1}}\subset D_{\epsilon_{2}}$ if $\epsilon_{2}\leq \epsilon_{1}$ and $\bigcup D_{\epsilon}=D$. Thus, by \cite[Theorem 3.8]{RY}, we have $\ds \lim_{\epsilon\rightarrow 0}s_{D_{\epsilon}}(z)=s_{D}(z)$
uniformly on compact subsets of $D$.

Since the squeezing function is continuous (\cite{Deng2012}) and $s_{D}$ is not subharmonic, there exist $z_{0}\in D$ and $\rho>0$ with $\dv(z_{0},\rho)\subset\subset D$ such that
$$s_{D}(z_{0}) > \frac{1}{2 \pi} \int_{-\pi}^{\pi} s_{D}\left(z_{0}+\rho e^{i\theta}\right) d \theta.$$
Hence, for sufficiently small $\epsilon$, we have
$$s_{D_{\epsilon}}(z_{0}) > \frac{1}{2 \pi} \int_{-\pi}^{\pi} s_{D_{\epsilon}}\left(z_{0}+\rho e^{i\theta}\right) d \theta.$$
This implies that $s_{D_{\epsilon}}(z)$ is not subharmonic.
\end{proof}

\begin{remark}
Using the Riemann mapping theorem, one can prove that $D_{\epsilon}$'s in the above corollary are holomorphically equivalent to bounded domains with smooth boundary (cf. \cite{Ahl}). Hence there exist bounded domains with smooth
boundary whose squeezing function is not subharmonic.
\end{remark}

\begin{example}
Let $0<r<R<1$. Set $\delta=r\sqrt{2-2\cos{\frac{2\pi}{m}}}$, and choose $m$ big enough such that $\delta<\frac{r-rR^2}{1+R^2}$ and $r+\delta<1$. Take $z_{j}=re^{i\frac{2j\pi}{m}}$ and $p_{j}=Re^{i\frac{2j\pi}{m}}$, $1\leq j\leq m$. Since $|z_{j+1}-z_{j}|=r\sqrt{2-2\cos{\frac{2\pi}{m}}}$, $1\leq j\leq m-1$, it follows that $\partial \mathbb{D}_{r}\subset \bigcup_{j=1}^{m}\mathbb{D}(z_j,\delta)$. On the other hand, we have
$$\tanh c_{\dv}(p_j,w)=\left|\frac{w-p_{j}}{1-\overline{p_{j}}w}\right|\leq\frac{|w-z_{j}|+|z_{j}-p_{j}|}{1-R(r+\delta)}\leq \frac{\delta+R-r}{1-R(r+\delta)}< R,$$
for any $w\in \mathbb{D}(z_{j},\delta)$, $1\leq j\leq m$. Hence, Corollary \ref{usq} applies in this setting.
\end{example}

By Theorem \ref{BP}, we have $s_{\mathbb{D}^{n}\backslash \{0\}}(z)=\frac{1}{\sqrt{n}}$, for any $z\in \dv^n\backslash \{0\}$ with $\|z\|\ge 1$. Take $\|z\|<1$ with $|z_{1}|=|z_{2}|=\cdots=|z_{n}|$. Then we have $s_{\mathbb{D}^{n}\backslash \{0\}}(z)=|z_{i}|<\frac{1}{\sqrt{n}}$. Hence $s_{\mathbb{D}^n\backslash \{0\}}(z)$ is not plurisubharmonic. On the other hand, by Theorem \ref{ati}, $\ds s^{\mathbb{D}^n}_{\mathbb{D}^n\backslash \{0\}}(z)=\max_{1\leq j \leq n}{|z_{j}|}$ is plurisubharmonic. Therefore, It is of interest to consider domains with non-plurisubharmonic generalized squeezing function with model domain $\Omega=\mathbb{D}^n$.

The next result is similar to Theorem \ref{T:4.1}. However, since $\partial \dv_r^n$ is Levi-flat, we can only use Theorem \ref{KB} to study the generalized squeezing function.

\begin{theorem}
Let $0<r<1$, $Q=(0,0,\cdots,r)$ and $\epsilon>0$ sufficiently small such that $B^n(Q,\epsilon)\subset \dv^{n}$, $n\ge 2$. Denote $A=(\partial \dv^{n}_{r})\backslash B^n(Q,\epsilon)$ and $D=\dv^n\backslash A$. Then $s_{D}^{\dv^n}(z)$ is not plurisubharmonic.
\end{theorem}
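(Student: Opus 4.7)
The plan is to follow the template of Theorem \ref{T:4.1}, with the one structural change dictated by the Levi-flatness of $\partial\dv_r^n$: Theorem \ref{bdary} is unavailable, so I must extract the formula for $s_D^{\dv^n}$ from Theorem \ref{KB} instead. First I would check the hypotheses of Theorem \ref{KB}: $A$ is closed in the compact set $\partial\dv_r^n$ and hence a compact subset of $\dv^n$, while $D=\dv^n\setminus A$ is connected because the inner and outer components of $\dv^n\setminus\partial\dv_r^n$ are glued together along the removed cap $B^n(Q,\epsilon)\cap\partial\dv_r^n$. Theorem \ref{KB} then yields
\[
s_D^{\dv^n}(z)=d_{c_{\dv^n}}^{A}(z)=\min_{w\in A}\tanh c_{\dv^n}(z,w),\qquad z\in D.
\]

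Next I would compute the value at the origin via the product formula \eqref{E:cD}: every $w\in A\subset\partial\dv_r^n$ satisfies $\tanh c_{\dv^n}(0,w)=\max_i|w_i|=r$, so $s_D^{\dv^n}(0)=r$.

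To produce a failure of plurisubharmonicity I would restrict the function to the complex line $H=\{z\in\cv^n:z_2=\cdots=z_n=0\}$, which contains the origin and stays away from $Q=(0,\ldots,0,r)$. For a point $z=(z_1,0,\ldots,0)\in H\cap\dv_r^n$ with $z_1\neq 0$, take the test point $w_\star=\bigl(rz_1/|z_1|,0,\ldots,0\bigr)\in\partial\dv_r^n$; since $\|w_\star-Q\|=r\sqrt{2}$, $w_\star$ lies in $A$ as soon as $\epsilon<r\sqrt{2}$. A direct application of \eqref{E:cD} gives
\[
\tanh c_{\dv^n}(z,w_\star)=\frac{r-|z_1|}{1-r|z_1|}<r,
\]
so $s_D^{\dv^n}(z_1,0,\ldots,0)\leq\frac{r-|z_1|}{1-r|z_1|}<r=s_D^{\dv^n}(0)$ whenever $0<|z_1|<r$.

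The conclusion is then immediate: for any sufficiently small $\rho>0$ the circular mean of $s_D^{\dv^n}$ over $\{|z_1|=\rho,\ z_2=\cdots=z_n=0\}$ is bounded above by $(r-\rho)/(1-r\rho)<r=s_D^{\dv^n}(0)$, violating the sub-mean-value inequality along the complex line $H$; hence $s_D^{\dv^n}$ cannot be plurisubharmonic on $\dv^n$. I do not anticipate a genuine obstacle. The only point that requires care is ensuring that enough of $\partial\dv_r^n$ survives in $A$ to furnish the test point $w_\star$ as an upper-bound witness, which is guaranteed by the smallness of $\epsilon$; notably, no detailed minimization of $\tanh c_{\dv^n}(z,\cdot)$ over all of $A$ is needed, since an upper bound already suffices to contradict the sub-mean-value property.
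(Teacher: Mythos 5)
Your proposal is correct and follows essentially the same route as the paper: apply Theorem \ref{KB} to identify $s_D^{\dv^n}$ with $d_{c_{\dv^n}}^{A}$, use the product formula \eqref{E:cD}, and restrict to the line $H=\{z_2=\cdots=z_n=0\}$ to exhibit a strict interior maximum at the origin. The only (harmless) difference is that you settle for an upper bound via the single test point $w_\star$ where the paper states the exact formula $\frac{r-|z_1|}{1-r|z_1|}$ on $\dv_r^n\cap H$, and you are more explicit about verifying the compactness and connectedness hypotheses of Theorem \ref{KB}.
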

\begin{proof}
Denote $H=\left\{z\in \mathbb{C}^n:\ z_2=z_3=\cdots=z_n=0\right\}$. Then $\dv_r^n\cap H\subset D\cap H$ is a disk of radius $r$ centered at the origin. By Theorem \ref{KB}, we have
$$s_{D}^{\dv^n}(z)=d_{c_{\dv^n}}^{A}(z),\ \ \ z\in D.$$
From \eqref{E:cD}, it is clear that
$$s_{D}^{\dv^n}(z)=\frac{r-|z_1|}{1-r|z_1|},\ \ \ z\in \dv^{n}_{r}\cap H.$$
Thus, $s_{D}^{\dv^n}(0)$ is maximal in $\dv^{n}_{r}\cap H$, showing that $s_{D}^{\dv^n}(z)$ is not subharmonic in $\dv^{n}_{r}\cap H$, which implies that $s_{D}^{\dv^n}(z)$ is not plurisubharmonic.
\end{proof}

The following result is similar to Theorem \ref{ball}.

\begin{theorem}\label{polydisk}
Let $0<r<R<1$ and $H=\left\{z\in \mathbb{C}^n:\ z_2=z_3=\cdots=z_n=0\right\}$. Choose $\delta>0$ such that for each $z\in \partial \dv^{n}_{r}\bigcap H$ and $p=\frac{R}{r}z$, we have $B^n(z,\delta)\subset \dv^n$ and $\tanh c_{\dv^n}(p,w)<R$ for all $w\in B^n(z,\delta)$. Take $\{z_{i}\}_{i=1}^m\subset \partial \dv^{n}_{r}\bigcap H$ such that $\partial \dv^{n}_{r}\bigcap H\subset \bigcup_{i=1}^{m} B^n(z_{i},\delta)$ and set $p_{i}=\frac{R}{r}z_i$. Denote
$$H_{i}=\left\{w\in \cv^n:\ w_{1}=p_{i1}\right\},\ \ \ 1\le i\le m.$$
Denote $A=\bigcup^{m}_{i=1}H_{i}$ and $D=\dv^n\backslash A$. Then $s_D^{\dv^n}(z)$ and $\tilde{e}_D^{\dv^n}(z)$ are not plurisubharmonic.
\end{theorem}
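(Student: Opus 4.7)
The plan is to follow the blueprint of Theorem \ref{ball}, only replacing $B^n$ by $\dv^n$, the zero-hyperplanes by the affine hyperplanes $H_i$, and formula \eqref{E:cB} by \eqref{E:cD}. The first step is to observe that $A=\bigcup_{i=1}^m H_i$ is a finite union of complex hyperplanes, so $A\cap \dv^n$ is a proper analytic subset of $\dv^n$. Theorem \ref{ati} therefore gives
$$s_D^{\dv^n}(z)=\tilde e_D^{\dv^n}(z)=d_{c_{\dv^n}}^{A}(z),\ \ \ z\in D,$$
and, applied to each $H_i$ separately,
$$s_{\dv^n\backslash H_i}^{\dv^n}(z)=d_{c_{\dv^n}}^{H_i}(z),\ \ \ z\in \dv^n\backslash H_i.$$
Because $H_i\subset A$, the minimum defining $d^A_{c_{\dv^n}}$ is no larger than the one defining $d^{H_i}_{c_{\dv^n}}$, which yields the monotonicity $s_D^{\dv^n}(z)\leq s_{\dv^n\backslash H_i}^{\dv^n}(z)$ for all $z\in D$; this is the key comparison estimate the whole argument rests on.

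Next I would compute the value at the origin and compare it to values on $\partial \dv_r^n\cap H$. Since $p_i=(p_{i1},0,\ldots,0)\in H_i$ with $|p_{i1}|=R$, formula \eqref{E:cD} gives $\tanh c_{\dv^n}(0,w)=\max_j |w_j|\geq |w_1|=R$ for $w\in H_i$, with equality at $p_i$; hence $d_{c_{\dv^n}}^{H_i}(0)=R$ for every $i$, so $s_D^{\dv^n}(0)=R$. For $w=(w_1,0,\ldots,0)\in \partial \dv_r^n\cap H$, the covering hypothesis yields some $i$ with $w\in B^n(z_i,\delta)$, and since $p_i$ and $w$ both have vanishing last $n-1$ coordinates,
$$\tanh c_{\dv^n}(w,p_i)=\Bigl|\tfrac{w_1-p_{i1}}{1-\overline{p_{i1}}w_1}\Bigr|<R$$
by the choice of $\delta$. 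Therefore $s_D^{\dv^n}(w)\leq s_{\dv^n\backslash H_i}^{\dv^n}(w)\leq \tanh c_{\dv^n}(w,p_i)<R$.

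To conclude non-plurisubharmonicity, I would restrict to the complex line $H$. Note $A\cap H=\{p_i\}_{i=1}^m$, each with $|p_{i1}|=R>r$, so the function $\varphi(\zeta):=s_D^{\dv^n}((\zeta,0,\ldots,0))$ is well-defined and (by continuity of the squeezing function, or directly of $d^A_{c_{\dv^n}}$) continuous on the closed disk $\{|\zeta|\leq r\}$. The previous step shows $\varphi(0)=R$ while $\varphi(\zeta)<R$ for all $|\zeta|=r$; this contradicts the maximum principle for subharmonic functions on $\{|\zeta|\leq r\}$, so $\varphi$ is not subharmonic and $s_D^{\dv^n}$ is not plurisubharmonic on $D$. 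The identical reasoning applies verbatim to $\tilde e_D^{\dv^n}$ because Theorem \ref{ati} identifies it with $s_D^{\dv^n}$.

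The proof is essentially mechanical once Theorem \ref{ati} and the polydisk distance formula are in place; the only step that needs care is checking that the hyperplanes $H_i$ are \emph{aimed} correctly, i.e.\ that the nearest-point computation from $0$ to $H_i$ uses only the first coordinate and therefore gives exactly $R$, which is the crucial equality $s_D^{\dv^n}(0)=R$. Apart from that, the main obstacle worth flagging is just the bookkeeping that $D\cap H$ indeed contains the closed disk of radius $r$ in $H$, which follows since $|p_{i1}|=R>r$ keeps the removed points away from this disk.
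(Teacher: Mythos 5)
Your proposal is correct and follows essentially the same route as the paper's proof: identify $s_D^{\dv^n}=\tilde e_D^{\dv^n}=d_{c_{\dv^n}}^{A}$ via Theorem \ref{ati}, compute $s_D^{\dv^n}(0)=R$ from \eqref{E:cD}, use the covering by the balls $B^n(z_i,\delta)$ together with the comparison $s_D^{\dv^n}\le s_{\dv^n\backslash H_i}^{\dv^n}$ to get $s_D^{\dv^n}(w)<R$ on $\partial\dv_r^n\cap H$, and conclude by the maximum principle on the line $H$. Your write-up is in fact slightly more explicit than the paper's at the final step, where the paper only asserts that the strict interior maximum on $\dv_r^n\cap H$ rules out plurisubharmonicity.
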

\begin{proof}
By Theorem \ref{ati}, we know that $s_D^{\dv^n}(z)=\tilde{e}_D^{\dv^n}(z)=d_{c_{\dv^n}}^A(z)$ for all $z\in D$, and $s_{B^n\backslash H_i}^{\dv^n}(z)=\tilde{e}_{B^n\backslash H_i}^{\dv^n}(z)=d_{c_{\dv^n}}^{H_i}(z)$ for all $z\in \dv^n\backslash H_i$. From \eqref{E:cD}, one readily checks that $s_D^{\dv^n}(0)=R$. It is also clear that $s_D^{\dv^n}(z)\leq s_{\dv^n\backslash H_{i}}^{\dv^n}(z)$ for $z\in D$.

Let $w\in \partial \dv^{n}_{r}\bigcap H$. Since $\partial \dv^{n}_{r}\bigcap H\subset \bigcup_{i=1}^{m} B^n(z_{i},\delta)$, we have $w\in \partial B^{n}_{r}\cap B^n(z_{i},\delta)$ for some $i$. Then we get $\tanh c_{\dv^n}(p_{i},w)<R$, which implies that $s_D^{\dv^n}(w)\leq s_{B^n\backslash H_{i}}^{\dv^n}(w)<R=s_D^{\dv^n}(0)$. Thus, we have $s_D^{\dv^n}(w)<s_D^{\dv^n}(0)$ for all $w\in \partial \dv_r^n\bigcap H$. Clearly, this implies that $s_D^{\dv^n}(z)$ is not plurisubharmonic.
\end{proof}


\begin{thebibliography}{99}

\bibitem{Ahl} L.V. Ahlfors, Complex Analysis, New York, McGraw-Hill, 1979.

\bibitem{Alex} H. Alexander, \emph{Extremal holomorphic imbeddings between the ball and polydisc}, Proc. Amer. Math. Soc., 68 (1978), 200-202.

\bibitem{FS1} L. Arosio L, J.E. Forn\ae ss, N. Shcherbina, E.F. Wold, \emph{Squeezing functions and Cantor sets}, Ann. Sc. Norm. Super. Pisa Cl. Sci. (5), 21 (2020), 1359-1369.

\bibitem{Barth} T.J. Barth, \emph{The Kobayashi indicatrix at the center of a circular domain}, Proc. Amer. Math. Soc. Vol., 88 (1983), 527-530.

\bibitem{Deng2012} F. Deng, Q. Guan, L. Zhang, \emph{Some properties of squeezing functions on bounded domains}, Pacific J. Math., 257 (2012), 319-341.

\bibitem{Deng-Zhang2019} F. Deng, X. Zhang, \emph{Fridman's invariant, squeezing functions, and exhausting domains}, Acta Math. Sin. (Engl. Ser.), 35 (2019), 1723-1728.

\bibitem{Dengsurvey2019} F. Deng, Z. Wang, L. Zhang, X. Zhou, \emph{Holomorphic invariants of bounded domains}, J. Geom. Anal., 30 (2020), 1204-1217.

\bibitem{FR} J.E. Forn\ae ss, F. Rong, \emph{Estimate of the squeezing function for a class of bounded domains}, Math. Ann., 371 (2018), 1087-1094.

\bibitem {Fridman1979} B.L. Fridman, \emph{On the imbedding of a strictly pseudoconvex domain in a polyhedron}, Dokl. Akad. Nauk SSSR 249 (1979), 63-67 (Russian) = Soviet Math. Dokl. 20 (1979), 1228-1232.

\bibitem {Fridman1983} B.L. Fridman, \emph{Biholomorphic invariants of a hyperbolic manifold and some applications}, Trans. Amer. Math. Soc., 276 (1983), 685-698.

\bibitem{FS} J.E. Forn\ae ss, N. Shcherbina, \emph{A domain with non-plurisubharmonic squeezing function}, J. Geom. Anal., 28 (2018), 13-21.

\bibitem{JK} S. Joo, K.T. Kim, \emph{On boundary points at which the squeezing function tends to one}, J. Geom. Anal., 28 (2018), 2456-2465.

\bibitem{Pflug2013} M. Jarnicki, P. Pflug, Invariant Distances and Metrics in Complex Analysis, Walter de Gruyter, 2013.

\bibitem{JJ} J. Janardhanan, \emph{Some results on holomorphic mappings of domains in $\mathbb{C}^n$}, Master's thesis, Indian Institute of Science, 2009.

\bibitem{Krantz} S.G. Krantz, Function Theory of Several Complex Variables, AMS Chelsea Publishing, Providence, RI, 1992.

\bibitem{KZ2016} K.T. Kim, L. Zhang, \emph{On the uniform squeezing property of bounded convex domains in $\cv^n$}, Pacific J. Math., 282 (2016), 341-358.

\bibitem{Lempert} L. Lempert, \emph{Holomorphic retracts and intrinsic metrics in convex domains}, Anal. Math., 8 (1982), 257-261.

\bibitem{MV2012} P. Mahajan, K. Verma, \emph{A comparison of two biholomorphic invariants}, Internat. J. Math., 30 (2019), 195-212.

\bibitem{NTT} T.W. Ng, C.C. Tang, J. Tsai, \emph{The squeezing function on doubly-connected domains via the Loewner differential equation}, Math. Ann., 380 (2021), 1741-1766.

\bibitem{Nikolov-Verma2018} N. Nikolov, K. Verma, \emph{On the squeezing function and Fridman invariants}, J. Geom. Anal., 30 (2020), 1218-1225.

\bibitem{Nikolov2020} N. Nikolov, M. Trybuła, \emph{Estimates for the squeezing function near strictly pseudoconvex boundary points with applications}, J. Geom. Anal., 30 (2020), 1359-1365.

\bibitem{Remmert} R. Remmert, K. Stein, \emph{Eigentliche holomorphe Abbildungen}, Math. Z., 73 (1960), 159-189.

\bibitem{RY1} F. Rong, S. Yang, \emph{On the comparison of the Fridman invariant and the squeezing function}, Complex Var. Elliptic Equ., to appear.

\bibitem{RY} F. Rong, S. Yang, \emph{On Fridman invariants and generalized squeezing functions}, Chin. Ann. Math. Ser. B, to appear.

\bibitem{Zhang2017} L. Zhang, \emph{Intrinsic derivative, curvature estimates and squeezing function}, Sci. China Math., 60 (2017), 1149-1162.

\bibitem{Zimmer2018} A. Zimmer, \emph{A gap theorem for the complex geometry of convex domains}, Trans. Amer. Math. Soc., 370 (2018), 7489-7509.

\bibitem{Zimmer2019} A. Zimmer, \emph{Characterizing strong pseudoconvexity, obstructions to biholomorphisms, and Lyapunov exponents}, Math. Ann., 374 (2019), 1811-1844.

\end{thebibliography}
\end{document}